\theoremstyle:=definition,remark,plain\do{%
        \expandafter\g@addto@macro\csname th@\theoremstyle\endcsname{%
            \addtolength\thm@preskip\parskip
            }%
        }
\newcommand{\pder}[2]{\frac{\partial #1}{\partial #2}}
\newcommand{\norm}[1]{\left\Vert #1\right\Vert}
\newcommand{\normTwo}[1]{\left\Vert #1\right\Vert^2}
\newcommand{\innerL}[2]{\left(#1, #2\right)_{L^2\left(\Omega\right)}}
\newcommand{\inner}[2]{\left(#1, #2\right)}
\newcommand{\dualprod}[2]{\left\langle #1, #2\right\rangle}
\newcommand{\vertiii}[1]{{\left\vert\kern-0.25ex\left\vert\kern-0.25ex\left\vert #1 
    \right\vert\kern-0.25ex\right\vert\kern-0.25ex\right\vert}_{\epsilon}}
\newcommand{\vertiiih}[1]{{\left\vert\kern-0.25ex\left\vert\kern-0.25ex\left\vert #1 
    \right\vert\kern-0.25ex\right\vert\kern-0.25ex\right\vert}_{\epsilon,h}}
\newcommand{\Schur}{\operatorname{Schur}}
\theoremstyle{plain}
\newtheorem{theorem}{Theorem}[section]
\newtheorem{corollary}{Corollary}[section]
\newtheorem{lemma}{Lemma}[section]
\newtheorem{problem}{Problem}[section]
\newtheorem{remark}{Remark}[section]
\newcommand{\foralls}{\forall\,}
\newcommand{\lTwo}{L^2\left(\Omega\right)}
\newcommand{\lTwoB}{L^2(\partial\Omega)}
\title{Schur complement preconditioners for multiple saddle point problems of block tridiagonal form with application to optimization problems} 
\author{
	Jarle Sogn\footnote{Institute of Computational Mathematics, Johannes Kepler University Linz, 4040 Linz, Austria (jarle@numa.uni-linz.ac.at, zulehner@numa.uni-linz.ac.at). The research was supported by the Austrian Science Fund (FWF): S11702-N23.} \, and
	Walter Zulehner\footnotemark[1]
}
\date{}
\begin{document}
\maketitle


\begin{abstract}
The importance of Schur complement based preconditioners are well-established for classical saddle point problems in $\mathbb{R}^N \times \mathbb{R}^M$. In this paper we extend these results to multiple saddle point problems in Hilbert spaces $X_1\times X_2 \times \cdots \times X_n$. 
For such problems with a block tridiagonal Hessian and a well-defined sequence of associated Schur complements, sharp bounds for the condition number of the problem are derived which do not depend on the involved operators. These bounds can be expressed in terms of the roots of the difference of two Chebyshev polynomials of the second kind.
If applied to specific classes of optimal control problems the abstract analysis leads to new existence results as well as to the construction of efficient preconditioners for the associated discretized optimality systems.

\end{abstract}

\FloatBarrier
\section{Introduction}

In this paper we discuss the well-posedness of a particular class of saddle point problems in function spaces and the related topic of efficient preconditioning of such problems after discretization. Problems of this class arise as the optimality systems of optimization problems in function spaces with a quadratic objective functional and constrained by linear partial differential equations. Another source for such problems are mixed formulations of elliptic boundary value problems. For numerous applications of saddle point problems we refer to the seminal survey article \cite{benzi2005numerical}.

To be more specific, we consider saddle point problems of the following form:
For a given functional $\mathcal{L}(x_1,x_2,\ldots,x_n)$ defined on a product space $X_1 \times X_2 \times \cdots \times X_n$  of Hilbert spaces $X_i$ with $n \ge 2$, find an $n$-tuple $(x_1^*,x_2^*,\ldots,x_n^*)$ from this space such that its component $x_i^*$ minimizes $\mathcal{L}^{[i]}(x_i)$ for all odd indices $i$ and maximizes $\mathcal{L}^{[i]}(x_i)$ for all even indices $i$, where
$\mathcal{L}^{[i]}(x_i) = \mathcal{L}(x_1^*,\ldots,x_{i-1}^*,x_i,x_{i+1}^*,\ldots,x_n^*)$.

Very often the discussion of saddle point problems is restricted to the case $n=2$. We will refer to these problems as classical saddle point problems.
In this paper we are interested in the general case $n\ge 2$. We call such problems multiple saddle point problems.
Saddle point problems with $n=3$ and $n=4$ are typically addressed in literature as double (or twofold) and triple (or threefold) saddle point problems, respectively. 

For notational convenience $n$-tuples $(x_1,x_2,\ldots,x_n) \in X_1 \times X_2 \times \cdots \times X_n$ are identified with the corresponding column vectors  $\mathbf{x}=(x_1,x_2,\ldots,x_n)^\top$ from the corresponding space $\mathbf{X}$. 
We consider only linear problems; that is, we assume that
\[
  \mathcal{L}(\mathbf{x}) 
    = \frac12 \dualprod{\mathcal{A} \mathbf{x}}{\mathbf{x}} - \dualprod{\mathbf{b}}{\mathbf{x}},
\]
where $\mathcal{A}$ is a bounded and self-adjoint linear operator which maps from $\mathbf{X}$ to its dual space $\mathbf{X}'$, $\mathbf{b} \in \mathbf{X}'$, and $\dualprod{.}{.}$ denotes the duality product. Observe that $\mathcal{A}$ is the (constant) Hessian of $\mathcal{L}(\mathbf{x})$ and has a natural $n$-by-$n$ block structure consisting of elements $\mathcal{A}_{ij}$ which map from $X_j$ to $X_i'$. 

The existence of a saddle point necessarily requires that the block diagonal elements $\mathcal{A}_{ii}$ are positive semi-definite for odd indices $i$ and negative semi-definite for even indices $i$. Under this assumption the problem of finding a saddle point of $\mathcal{L}$ is equivalent to find a solution $\mathbf{x}^* \in \mathbf{X}$ of the linear operator equation
\begin{equation} \label{Axb}
  \mathcal{A} \mathbf{x} = \mathbf{b}.
\end{equation}
Typical examples for the case $n=2$ are optimality systems of constrained quadratic optimization problems, where $\mathcal{L}$ is the associated Lagrangian, $x_1$ is the primal variable, and $x_2$ is the Lagrangian multiplier associated to the  constraint. Optimal control problems viewed as constrained optimization problems fall also into this category with $n=2$. However, since in this case the primal variable itself consists  of two components, the state variable and the control variable, we can view such problems also as double saddle problems (after some reordering of the variables), see \cite{KAMmagneOC} and Chapter \ref{sec:OCPoissonProblem}. For an example of a triple saddle point problem, see, e.g., \cite{langer2007inexact}. Other sources of multiple saddle point problems can be found, e.g., in \cite{GaticaHeuer2000}, \cite{GaticaHeuer2002}, \cite{Gatica2007}, \cite{Benzi2017} and the references therein.

The goal of this paper is to extend well-established results on block diagonal preconditioners for classical saddle point problems in $\mathbb{R}^N \times \mathbb{R}^M$ as presented in \cite{kuznetsov95}, \cite{murphy2000note} to multiple saddle point problems in Hilbert spaces. This goal is achieved for operators $\mathcal{A}$ of block tridiagonal form which possess an associated sequence of positive definite Schur complements. We will show for a particular norm build from these Schur complements that the condition number of the operator $\mathcal{A}$ is bounded by a constant independent of $\mathcal{A}$.  So, if $\mathcal{A}$ contains any sensitive model parameters (like a regularization parameter) or $\mathcal{A}$ depends on some discretization parameters (like the mesh size), the bound of the condition number is independent of these quantities. This, for example, prevents the performance of iterative methods from deteriorating for small regularization parameters or small mesh sizes. Moreover we will show that the bounds are solely given in terms of the roots of the difference of two Chebyshev polynomials of the second kind and that the bounds are sharp for the discussed class of block tridiagonal operators.

The abstract analysis allows to recover known existence results under less restrictive assumptions. This was the main motivation for extending the analysis to Hilbert spaces. We will exemplify this for optimal control problems with a second-order elliptic state equation, distributed observation, and boundary control, as discussed, e.g., in \cite{may2013error}, and for boundary observation and distributed control, as discussed, e.g., in \cite{KAMmagneOC}. Another outcome of the abstract analysis is the construction of preconditioners for discretized optimality systems which perform well in combination with Krylov subspace methods for solving the linear system. Here we were able to recover known results from \cite{KAMmagneOC} and extend them to other problems. The article \cite{KAMmagneOC} was very influential for the study presented here. As already noticed in \cite{KAMmagneOC}, there is a price to pay for the construction of these efficient preconditioners: For second-order elliptic state equations discretization spaces of continuously differentiable functions are required, for which we use here  technology from Isogeometric Analysis (IgA), see the monograph \cite{cottrell2009isogeometric}.

Observe that the analysis presented here is valid for any number $n\ge 2$ of blocks. There are numerous contributions for preconditioning classical saddle point problems, see \cite{benzi2005numerical} and the references cited there. See, in particular, among many others contributions \cite{Pearson2012}, for Schur complement based approaches.
For other results on the analysis and the construction of preconditioners for double/twofold and triple/threefold saddle point problems see, e.g., \cite{GaticaHeuer2000}, \cite{GaticaHeuer2002}, \cite{Gatica2007},\cite{langer2007inexact}, \cite{Pestana2016}, \cite{Benzi2017}.

The paper is organized as follows. The abstract analysis of a class of multiple saddle point problems of block tridiagonal form is given in Section 2. Section 3 deals with the application to particular optimization problems in function spaces. Discretization and efficient realization of the preconditioner are discussed in Section 4. A few numerical experiments are shown in Section 5 for illustrating the abstract results. The paper ends with some conclusions in Section 6 and an appendix, which contains some technical details related to Chebyshev polynomial of the second kind used in the proofs of the abstract results in Section 2.

\section{Schur complement preconditioners}
\label{sec:Theory}

The following notations are used throughout the paper.
Let $X$ and $Y$ be Hilbert spaces with dual spaces $X'$ and $Y'$. 
For a bounded linear operator $B: X \rightarrow Y'$, its Banach space adjoint $B': Y \rightarrow X'$ is given by
\begin{equation*}
  \langle B'y,x\rangle = \langle Bx,y\rangle
  \quad \text{for all} \ x\in X, \ y \in Y,
\end{equation*}
where $\left\langle \cdot, \cdot\right\rangle$ the denotes the duality product.
For a bounded linear operator $L: X \rightarrow Y$, its Hilbert space adjoint $L^{*}: Y \rightarrow X$ is given by 
\begin{equation*}
\inner{L^{*}y}{x}_{X} = \inner{Lx}{y}_{Y}
  \quad \text{for all} \ x\in X, \ y \in Y,
\end{equation*}
where $\inner{\cdot}{\cdot}_X$ and $\inner{\cdot}{\cdot}_Y$ are the inner products of $X$ and $Y$ with associated norms $\|\cdot\|_X$ and $\|\cdot\|_Y$, respectively.

Let $X = U \times V$ with Hilbert spaces $U$ and $V$. Then its dual $X'$ can be identified with $U' \times V'$. For a linear operator $T : U \times V \rightarrow U' \times V'$ of a 2--by--2 block form
\[
  T 
    = \begin{pmatrix}
       A & C \\ B & D
      \end{pmatrix},
\]
with an invertible operator $A : U \rightarrow U'$, its Schur complement $\Schur T : V \rightarrow V'$ is given by
\begin{equation*}
  \Schur T = D - B A^{-1} C .
\end{equation*}

With these notations we will now precisely formulate the assumptions on problem (\ref{Axb}) as already indicated in the introduction. 
Let $\mathbf{X} = X_1 \times X_2\times \ldots \times X_n$ with Hilbert spaces $X_i$ for $i=1,2,\ldots,n$, and let the linear operator $\mathcal{A}: \mathbf{X} \rightarrow \mathbf{X}'$ be of $n$--by--$n$ block tridiagonal form
  \begin{equation}
    \label{eq:ADefNN}
    \mathcal{A} =     
    \begin{pmatrix} 
      A_1 & B_1'  &   & \\
      B_1 & -A_2  & \ddots &  \\
          &\ddots & \ddots & B_{n-1}' \\[1ex]
          &       & B_{n-1}   & (-1)^{n-1}A_n
  \end{pmatrix},
  \end{equation}
where $A_i:X_i \rightarrow X_i'$, $B_i: X_i \rightarrow X_{i+1}'$ are bounded operators, and, additionally, $A_i$ are self-adjoint and positive semi-definite; that is,
\begin{equation} \label{spsd}
  \langle A_i y_i,x_i \rangle = \langle A_i x_i,y_i \rangle
  \quad \text{and} \quad 
  \langle A_i x_i, x_i \rangle \ge 0
  \quad \foralls \ x_i,\ y_i \in X_i.
\end{equation}
The basic assumption of the whole paper is now that the operators $\mathcal{A}_i$ consisting of the first $i$ rows and columns of $\mathcal{A}$ are invertible operators from $X_1 \times \cdots \times X_i$ to $X_1' \times \cdots \times X_i'$. That allows to introduce the linear operators 
\begin{equation*}
  S_{i+1} = (-1)^i \, \Schur \mathcal{A}_{i+1}
  \quad \text{for}\quad i=1,\ldots,n-1,
\end{equation*}
where, for the definition of the Schur complement, $\mathcal{A}_{i+1}$ is interpreted as the 2--by-2 block operator
\begin{equation*}
  \mathcal{A}_{i+1}
    = \begin{pmatrix} 
      \mathcal{A}_i & \mathbf{B}_i' \\ 
      \mathbf{B}_i & (-1)^i \, A_{i+1}
      \end{pmatrix}, 
      \quad \text{where} \quad \mathbf{B}_i = \begin{pmatrix} 0 & \ldots & 0 & B_i \end{pmatrix} .
\end{equation*}
It is easy to see that
\begin{equation}
\label{eq:SchurCompCom}
S_{i+1} = A_{i+1} + B_i S^{-1}_i B_i', \quad \text{for}\quad i = 1, 2,\ldots,n-1.
\end{equation}
with initial setting $S_1 = A_1$. 

The following basic result holds:
\begin{lemma} \label{lem:one}
Assume that 
$A_i:X_i \rightarrow X_i'$, $i = 1,2\ldots,n$ are bounded operators satisfying (\ref{spsd}), 
$B_i: X_i \rightarrow X_{i+1}'$, $i=1,\ldots,n-1$ are bounded operators, and 
$S_i$, $i=1,2,\ldots,n$, given by (\ref{eq:SchurCompCom}), are well-defined and positive definite, that is 
\begin{equation*}
  \langle S_i x_i, x_i \rangle \ge \sigma_i \, \|x_i\|_{X_i}^2
  \quad \foralls \ x_i \in X_i,
\end{equation*}
for some positive constants $\sigma_i$. Then $\mathcal{A}$ is an isomorphism form $\mathbf{X}$ to $\mathbf{X}'$.
\end{lemma}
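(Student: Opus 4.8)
The plan is to prove the stronger statement that \emph{every} leading principal block operator $\mathcal{A}_i : X_1 \times \cdots \times X_i \rightarrow X_1' \times \cdots \times X_i'$ is an isomorphism, and then to specialize to $i=n$, since $\mathcal{A}_n = \mathcal{A}$. I would argue by induction on $i$. The base case $i=1$ is immediate: $\mathcal{A}_1 = A_1 = S_1$ is bounded by assumption and coercive by the positive-definiteness hypothesis, hence an isomorphism by the Lax--Milgram theorem.

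For the inductive step I would exploit the $2$--by--$2$ block form of $\mathcal{A}_{i+1}$ introduced above together with the standard block factorization
\begin{equation*}
  \mathcal{A}_{i+1}
    = \begin{pmatrix} I & 0 \\ \mathbf{B}_i \mathcal{A}_i^{-1} & I\end{pmatrix}
      \begin{pmatrix} \mathcal{A}_i & 0 \\ 0 & \Schur \mathcal{A}_{i+1}\end{pmatrix}
      \begin{pmatrix} I & \mathcal{A}_i^{-1}\mathbf{B}_i' \\ 0 & I\end{pmatrix},
\end{equation*}
which is valid once $\mathcal{A}_i$ is invertible (the induction hypothesis). The two triangular factors are block operators with identity diagonal blocks and a single bounded off-diagonal block, hence isomorphisms with explicit (block triangular) bounded inverses. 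Consequently $\mathcal{A}_{i+1}$ is an isomorphism if and only if the middle block-diagonal factor is, i.e. if and only if both $\mathcal{A}_i$ and $\Schur \mathcal{A}_{i+1}$ are isomorphisms. By definition $\Schur \mathcal{A}_{i+1} = (-1)^i S_{i+1}$, so it only remains to show that each $S_{i+1}$ is an isomorphism.

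This reduces the whole lemma to the claim that every $S_i : X_i \rightarrow X_i'$ is an isomorphism, which I would again obtain from Lax--Milgram: coercivity is exactly the hypothesis $\langle S_i x_i, x_i\rangle \ge \sigma_i \|x_i\|_{X_i}^2$, while boundedness follows inductively from the recursion (\ref{eq:SchurCompCom}), since $A_{i+1}$ and $B_i$ are bounded and $S_i^{-1}$ is bounded with $\|S_i^{-1}\| \le \sigma_i^{-1}$ (the latter bound being precisely what coercivity of $S_i$ furnishes). One also checks inductively, via (\ref{eq:SchurCompCom}), that each $S_i$ is self-adjoint, consistent with $\mathcal{A}$ being self-adjoint, though this is not strictly needed for invertibility.

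The main obstacle is bookkeeping rather than conceptual: one must verify that every composition in the factorization maps between the correct primal and dual spaces, so that $\mathbf{B}_i \mathcal{A}_i^{-1}$ and $\mathcal{A}_i^{-1}\mathbf{B}_i'$ are genuinely the bounded operators they appear to be, and that the signs in $\Schur \mathcal{A}_{i+1} = (-1)^i S_{i+1}$ propagate correctly through the recursion. Equivalently, and perhaps more transparently, the same content can be packaged as a single global factorization $\mathcal{A} = \mathcal{U}' \mathcal{D}\, \mathcal{U}$ with $\mathcal{U}$ block upper bidiagonal with identity diagonal blocks and $\mathcal{D} = \diag\big((-1)^{i-1} S_i\big)$; then $\mathcal{U}$ is an isomorphism of $\mathbf{X}$, $\mathcal{D}$ an isomorphism from $\mathbf{X}$ to $\mathbf{X}'$, and $\mathcal{A}$ is an isomorphism as a composition of isomorphisms.
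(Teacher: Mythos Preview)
Your proposal is correct and follows essentially the same strategy as the paper: invertibility of each $S_i$ via Lax--Milgram, combined with a block triangular/Schur factorization of $\mathcal{A}$. The paper writes this as a single global block $LU$-decomposition of $\mathcal{A}$ (then invokes the open mapping theorem), whereas you phrase it inductively via the $2\times2$ $LDU$ step or, equivalently, as your final global $\mathcal{U}'\mathcal{D}\,\mathcal{U}$ factorization; these are the same argument up to cosmetic repackaging.
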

\begin{proof}\unskip
From the lemma of Lax-Milgram it follows that $S_i$, $i=1,2,\ldots,n$ are invertible, which allows to derive a block $LU$-decomposition of $\mathcal{A}$ into invertible factors:
\[
  \mathcal{A} = 
  \begin{pmatrix} 
    I &  \\
    B_1 S_1^{-1} & I \\
     &  \ddots & \ddots \\
     & & (-1)^{n-2} B_{n-1} S_{n-1}^{-1}  & I
  \end{pmatrix}
  \begin{pmatrix}
    S_1 & B_1' \\
    & -S_2 & \ddots\\
    & & \ddots & B_{n-1}'\\
    & & & (-1)^{n-1} \, S_n
  \end{pmatrix} .
\]
So $\mathcal{A}$ is a bounded linear operator, which is invertible. Therefore, $\mathcal{A}$ is an isomorphism by the open mapping theorem.  
\end{proof}

With a slight abuse of notation we call $S_i$ Schur complements, although they are actually positive or negative Schur complements in the literal sense.

Under the assumptions made so far, we define the Schur complement preconditioner as the block diagonal linear operator $\mathcal{S}(\mathcal{A}) \colon \mathbf{X} \rightarrow \mathbf{X}'$, given by
\begin{equation}
\label{eq:SchurDefNN}
     \mathcal{S}(\mathcal{A}) =
    \begin{pmatrix} 
      S_1 &   &   & \\
          & S_2  &   &  \\
          &     & \ddots & \\
          &         &   & S_n
  \end{pmatrix}.
\end{equation}
If it is clear from the context which operator $\mathcal{A}$ is meant, we will omit the argument $\mathcal{A}$ and simply use $\mathcal{S}$ for the Schur complement preconditioner.
Since $\mathcal{S}$ is bounded, self-adjoint, and positive definite, it induces an inner product on $\mathbf{X}$ by
\begin{equation*}
\left(\mathbf{x},\mathbf{y}\right)_\mathcal{S} = \left\langle \mathcal{S}\mathbf{x},\mathbf{y}\right\rangle, 
\quad \text{for }\, \mathbf{x},\mathbf{y}\in \mathbf{X},
\end{equation*}
whose associated norm $\|\mathbf{x}\|_\mathcal{S} = \left(\mathbf{x},\mathbf{x}\right)_\mathcal{S}^{1/2}$ is equivalent to the canonical product norm in $\mathbf{X}$. Note that
\[
  \left(\mathbf{x},\mathbf{y}\right)_\mathcal{S} = \sum_{i=1}^n (x_i,y_i)_{S_i}
  \quad \text{with} \quad (x_i,y_i)_{S_i} = \langle S_i x_i,y_i\rangle.
\]
Here $x_i \in X_i$ and $y_i \in X_i$ denote the $i$-th component of $\mathbf{x} \in \mathbf{X}$ and $\mathbf{y} \in \mathbf{X}$, respectively.

From now on we assume that the spaces $\mathbf{X}$ and $\mathbf{X}'$ are equipped with the norm $\norm{.}_{\mathcal{S}}$ and the associated dual norm, respectively. The question whether (\ref{Axb}) is well-posed translates to the question whether $\mathcal{A} \colon \mathbf{X} \rightarrow \mathbf{X}'$ is an isomorphism.
The condition number $\kappa(\mathcal{A})$, given by 
\[
  \kappa\left(\mathcal{A}\right)
    = \norm{\mathcal{A}}_{\mathcal{L}\left(\mathbf{X},\mathbf{X}'\right)}
      \|\mathcal{A}^{-1}\|_{\mathcal{L}\left(\mathbf{X}',\mathbf{X}\right)},
\]
measures the sensitivity of the solution of (\ref{Axb}) with respect to data perturbations. Here $\mathcal{L}(X,Y)$ denotes the space of bounded linear operators from $X$ to $Y$, equipped with the standard operator norm. 

By the Riesz representation theorem the linear operator $\mathcal{S} \colon \mathbf{X} \rightarrow \mathbf{X}'$ is an isomorphism from $\mathbf{X}$ to $\mathbf{X}'$.
Therefore, $\mathcal{A}$ is an isomorphism if and only if $\mathcal{M}\colon \mathbf{X} \rightarrow \mathbf{X}$, given by 
\[
  \mathcal{M} = \mathcal{S}^{-1}\mathcal{A},
\]
is an isomorphism. In this context, the operator $\mathcal{S}$ can be seen as a preconditioner for $\mathcal{A}$ and $\mathcal{M}$ is the associated preconditioned operator. Moreover, it is easy to see that
\[
  \|\mathcal{A}\|_{\mathcal{L}\left(\mathbf{X},\mathbf{X}'\right)} = \|\mathcal{M}\|_{\mathcal{L}\left(\mathbf{X},\mathbf{X}\right)}
\]
and, in case of well-posedness,
\[
  \kappa\left(\mathcal{A}\right)
   = \kappa\left(\mathcal{M}\right)
   \quad \text{with} \quad
  \kappa\left(\mathcal{M}\right)
    = \norm{\mathcal{M}}_{\mathcal{L}\left(\mathbf{X},\mathbf{X}\right)}
      \|\mathcal{M}^{-1}\|_{\mathcal{L}\left(\mathbf{X},\mathbf{X}\right)}.
\]
The condition number $\kappa(\mathcal{M})$ is of significant influence on the convergence rate of preconditioned Krylov subspace methods for solving (\ref{Axb}).
We will now derive bounds for $\kappa(\mathcal{M})$, from which we will simultaneously learn about both the efficiency of the preconditioner $\mathcal{S}$ as well as the well-posedness of (\ref{Axb}) with respect to the norm $\norm{.}_{\mathcal{S}}$. See \cite{winther2011prec} for more on this topic of operator preconditioning.

We start the discussion by observing that 
  \begin{equation}
    \label{eq:CDef}
    \mathcal{M} = 
    \begin{pmatrix} 
    I & C^{*}_1  &  &\\
    C_1 & -\left(I-C_1C^{*}_1\right)  & \ddots &\\
       & \ddots& \ddots& C^{*}_{n-1}\\
        & & C_{n-1}   & (-1)^{n-1}\left(I-C_{n-1}C^{*}_{n-1}\right)
  \end{pmatrix},
\end{equation}
where
\begin{equation*}
C_{i}=S^{-1}_{i+1}B_i \quad\text{for } \, i = 1, 2,\ldots ,n-1.
\end{equation*}
For its Hilbert space adjoint $C^{*}_{i}$ with respect to the inner product $\left(\mathbf{x},\mathbf{y}\right)_\mathcal{S}$ we have the following representation:
\begin{equation*}
C^{*}_{i}=S^{-1}_iB_i',\quad\text{for } \, i = 1, 2,\ldots ,n-1.
\end{equation*}

In the next two theorems we will derive bounds for the norm of $\mathcal{M}$ and its inverse.

\begin{theorem}
\label{theo:CBound}
For the operator $\mathcal{M}$ the following estimate holds: 
\begin{equation*}
\norm{\mathcal{M}}_{\mathcal{L}\left(\mathbf{X},\mathbf{X}\right)} \leq 2\cos\left(\frac{\pi}{2n+1}\right).
\end{equation*}
\end{theorem}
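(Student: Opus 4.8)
The plan is to exploit that the preconditioned operator $\mathcal{M} = \mathcal{S}^{-1}\mathcal{A}$ is self-adjoint with respect to the inner product $\left(\cdot,\cdot\right)_\mathcal{S}$: since $\left(\mathcal{M}\mathbf{x},\mathbf{y}\right)_\mathcal{S} = \dualprod{\mathcal{A}\mathbf{x}}{\mathbf{y}}$ and $\mathcal{A}$ is self-adjoint, this form is symmetric in $\mathbf{x},\mathbf{y}$. Consequently $\norm{\mathcal{M}}_{\mathcal{L}(\mathbf{X},\mathbf{X})} = \sup\{\,|\left(\mathcal{M}\mathbf{x},\mathbf{x}\right)_\mathcal{S}| : \norm{\mathbf{x}}_\mathcal{S}=1\,\}$, so it suffices to bound the quadratic form $q(\mathbf{x}) := \left(\mathcal{M}\mathbf{x},\mathbf{x}\right)_\mathcal{S}$ above and below by $\pm\,2\cos(\pi/(2n+1))\,\norm{\mathbf{x}}_\mathcal{S}^2$. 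The first ingredient I would record is the operator bound $\norm{C_i}\le 1$: using $C_i = S_{i+1}^{-1}B_i$, $C_i^{*}=S_i^{-1}B_i'$ together with (\ref{eq:SchurCompCom}) gives $I - C_iC_i^{*} = S_{i+1}^{-1}A_{i+1}$, which is self-adjoint and positive semi-definite in the $S_{i+1}$-inner product by (\ref{spsd}); hence $C_iC_i^{*}$ lies between $0$ and $I$ in the operator sense, so $\|C_i^{*}x_{i+1}\|_{S_i}\le \|x_{i+1}\|_{S_{i+1}}$.

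Writing $t_i = \|x_i\|_{S_i}$ and $w_i = C_i^{*}x_{i+1}$ (so $\|w_i\|_{S_i}\le t_{i+1}$), I would expand $q$ from the block form (\ref{eq:CDef}). Each diagonal block contributes $(-1)^{i-1}\bigl(t_i^2 - \|w_{i-1}\|_{S_i}^2\bigr)$ and the off-diagonal blocks contribute the cross terms $2\left(w_i,x_i\right)_{S_i}$. Completing the square in each $w_i$ and observing that the resulting $-(-1)^{i-1}t_i^2$ exactly cancels the diagonal $t_i^2$ for every $i\le n-1$ yields the clean identity
\[
  q(\mathbf{x}) = (-1)^{n-1}t_n^2 + \sum_{i=1}^{n-1}(-1)^{i-1}\bigl\|\,w_i + (-1)^{i-1}x_i\,\bigr\|_{S_i}^2 .
\]

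To bound this alternating sum of squares I would pass to a scalar optimization. Setting $s_i := \|w_i + (-1)^{i-1}x_i\|_{S_i}$, the only surviving constraints are $t_i\ge 0$, $\sum_i t_i^2 = 1$, and $\max(0,\,t_i - t_{i+1}) \le s_i \le t_i + t_{i+1}$, with the $s_i$ freely and independently choosable in these ranges. Maximizing $q$ means taking $s_i$ at the top of its range in the positive (odd-$i$) terms and at the bottom in the negative (even-$i$) terms; in the generic regime $t_i\ge t_{i+1}$ the cross terms all become $+2t_it_{i+1}$ and every interior diagonal coefficient cancels, so $q$ reduces precisely to $\mathbf{t}^{\top}T_n\mathbf{t}$ for the symmetric tridiagonal matrix $T_n$ with unit off-diagonals and diagonal $(1,0,\ldots,0)$. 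Its characteristic polynomial is $U_n(\lambda/2)-U_{n-1}(\lambda/2)$, a difference of two Chebyshev polynomials of the second kind, whose largest root is $2\cos(\pi/(2n+1))$; I would import this spectral computation from the appendix. Applying the same argument to $-q$ controls the lower bound, and together they give the claimed estimate.

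I expect the \emph{main obstacle} to be the passage from the vector-valued form to the scalar optimization, specifically the bookkeeping forced by the truncation $\max(0,\,t_i - t_{i+1})$. One must verify that the configurations where this truncation is active never produce a value exceeding $2\cos(\pi/(2n+1))$, so that the clean tridiagonal form $T_n$ is genuinely the extremal case rather than an artifact of dropping the truncation. The accompanying identification $\|T_n\| = 2\cos(\pi/(2n+1))$ via the roots of $U_n - U_{n-1}$ is the second substantial step, and is the place where the Chebyshev machinery of the appendix is essential.
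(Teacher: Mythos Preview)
Your identity
\[
  q(\mathbf{x}) = (-1)^{n-1}t_n^2 + \sum_{i=1}^{n-1}(-1)^{i-1}\bigl\|\,C_i^{*}x_{i+1} + (-1)^{i-1}x_i\,\bigr\|_{S_i}^2
\]
is correct and elegant, and the reduction of $\|T_n\|$ to the largest root of $U_n(\lambda/2)-U_{n-1}(\lambda/2)$ is exactly the Chebyshev computation carried out in the appendix. The difficulty you single out, however, is a genuine gap and not mere bookkeeping. Once you relax to scalars and set the even-$i$ terms to their floor $\max(0,t_i-t_{i+1})$, the relaxed objective in the truncated region is \emph{strictly larger} than $\mathbf{t}^{\top}T_n\mathbf{t}$; for instance, for $n=4$ with $t_2<t_3$ the relaxed objective becomes $\mathbf{t}^{\top}\tilde T\,\mathbf{t}$ for the block-diagonal matrix $\tilde T=\bigl(\begin{smallmatrix}1&1\\1&1\end{smallmatrix}\bigr)\oplus\bigl(\begin{smallmatrix}1&1\\1&0\end{smallmatrix}\bigr)$, whose top eigenvalue is $2>2\cos(\pi/9)$. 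The constraint $t_2\le t_3$ ultimately prevents this value from being attained, but establishing that for every $n$ and every pattern of active truncations would require a separate (inductive or case-by-case) argument that you have not supplied; as written, nothing rules out that the relaxed supremum exceeds $2\cos(\pi/(2n+1))$.

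The paper sidesteps this issue entirely. Instead of completing the square exactly and then relaxing, it applies a weighted Young inequality $2(C_i^{*}x_{i+1},x_i)_{S_i}\le \epsilon_i\|C_i^{*}x_{i+1}\|_{S_i}^2+\epsilon_i^{-1}\|x_i\|_{S_i}^2$ with free parameters $\epsilon_i>0$. Combining this with $\|C_i^{*}x_{i+1}\|_{S_i}^2\le\|x_{i+1}\|_{S_{i+1}}^2$ (your $\|C_i\|\le 1$) yields the purely diagonal bound $q(\mathbf{x})\le y^{\top}D_u\,y$ with entries $1+\epsilon_1^{-1},\ \epsilon_1+\epsilon_2^{-1},\ \ldots,\ \epsilon_{n-1}$, provided only that $\epsilon_i\ge 1$. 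Equalising these diagonal entries gives a continued fraction whose convergents are $P_j/P_{j-1}$ with $P_j=U_j(\cdot/2)$, and the resulting equation $\bar P_n(\epsilon_{n-1})=0$ has the largest root $2\cos(\pi/(2n+1))$, which one checks satisfies the constraints $\epsilon_i\ge 1$. The upshot is that the paper's Young-inequality route leads directly to a \emph{diagonal} comparison matrix, so no scalar optimisation with side constraints is needed and the truncation problem never arises; the Chebyshev step is the same as yours.
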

\begin{proof}\unskip
First we note that the norm can be written in the following way: 
\begin{equation} \label{RQ}
\norm{\mathcal{M}}_{\mathcal{L}\left(\mathbf{X},\mathbf{X}\right)} 
= \sup_{0 \neq \mathbf{x}\in \mathbf{X}} \frac{\vert\left( \mathcal{M}\mathbf{x},\mathbf{x}\right)_{\mathcal{S}}\vert}{\left(\mathbf{x},\mathbf{x}\right)_{\mathcal{S}}}.
\end{equation}
We will now estimate the numerator $\left(\mathcal{M}\mathbf{x},\mathbf{x}\right)_{\mathcal{S}}$.
Let $\mathbf{x} \in X$ and let $x_i \in X_i$ be the $i$-th component of $\mathbf{x}$. Then it follows from (\ref{eq:CDef}) that
\begin{align*}
\left(\mathcal{M}\mathbf{x},\mathbf{x}\right)_{\mathcal{S}} = \norm{x_1}_{S_1}^2 + 2\sum^{n-1}_{i=1} \inner{C^{*}_{i} x_{i+1}}{x_i}_{S_i} 
 + \sum^{n-1}_{i=1} (-1)^{i} \inner{\left(I-C_{i}C^{*}_{i}\right)x_{i+1}}{x_{i+1}}_{S_{i+1}} .
  \end{align*}
By applying Cauchy's inequality and Young's inequality, we obtain for parameters $\epsilon_i > 0$ 
\begin{align*}
2\inner{C^{*}_{i} x_{i+1}}{x_i}_{S_i} \leq 2\norm{C^{*}_{i} x_{i+1}}_{S_i} \norm{x_i}_{S_i} 
&\leq \epsilon_i\inner{C^{*}_{i}x_{i+1}}{C^{*}_{i}x_{i+1}}_{S_i}+ \frac{1}{\epsilon_i} \norm{x_i}_{S_i}^2,\\
&= \epsilon_i\inner{C_{i}C^{*}_{i}x_{i+1}}{x_{i+1}}_{S_{i+1}} + \frac{1}{\epsilon_i} \norm{x_i}_{S_i}^2.
\end{align*}
Therefore,
\begin{align*}
\left(\mathcal{M}\mathbf{x},\mathbf{x}\right)_{\mathcal{S}} 
&\leq \norm{x_1}_{S_1}^2 + \sum^{n-1}_{i=1} \frac{1}{\epsilon_i}\norm{x_{i}}_{S_{i}}^2 
   +\sum^{n-1}_{i=1} \left( \epsilon_i - (-1)^i\right) \, \inner{C_{i}C^{*}_{i}x_{i+1}}{x_{i+1}}_{S_{i+1}} \\
 & \quad {} + \sum^{n-1}_{i=1} (-1)^{i}\norm{x_{i+1}}_{S_{i+1}}^2.
\end{align*}
Since $A_{i+1}$ is positive semi-definite, it follows that
\begin{align}
& \inner{C_iC^{*}_ix_{i+1}}{x_{i+1}}_{S_{i+1}} = \left\langle B_{i}S^{-1}_{i}B_{i}'x_{i+1}, x_{i+1}\right\rangle \nonumber\\
& \quad \le \left\langle A_{i+1}x_{i+1}, x_{i+1}\right\rangle +\left\langle B_{i}S^{-1}_{i}B_{i}'x_{i+1}, x_{i+1}\right\rangle 
= \left\langle S_{i+1}x_{i+1}, x_{i+1}\right\rangle = 
\norm{x_{i+1}}_{S_{i+1}}^2 \label{eq:proofCClowNN}
\end{align}
Now we make an essential assumption on the choice of the parameters $\epsilon_i$:
\begin{equation} \label{epsge1}
  \epsilon_i \ge 1 \quad \text{for} \ i = 1,\ldots,n-1.
\end{equation}
By using (\ref{epsge1}) and (\ref{eq:proofCClowNN}), the estimate for $\left(\mathcal{M}\mathbf{x},\mathbf{x}\right)_{\mathcal{S}}$ from above simplifies to 
\begin{align*}
\left(\mathcal{M}\mathbf{x},\mathbf{x}\right)_{\mathcal{S}} &\leq \norm{x_1}_{S_1}^2 + \sum^{n-1}_{i=1}\frac{1}{\epsilon_i} \norm{x_{i}}_{S_{i}}^2 +\sum^{n-1}_{i=1} \epsilon_i \norm{x_{i+1}}_{S_{i+1}}^2 = y^T D_u \, y,
\end{align*}
where
\begin{align*}
D_u = \begin{pmatrix} 
1+\frac{1}{\epsilon_1} & & & &\\
& \epsilon_1 +\frac{1}{\epsilon_2}& & &\\
& & \ddots & & \\
& & & \epsilon_{n-2} +\frac{1}{\epsilon_{n-1}} & \\
& & & & \epsilon_{n-1}
\end{pmatrix}
\quad \text{and} \quad
y = \begin{pmatrix} \|x_1\|_{S_1} \\ \|x_2\|_{S_2} \\ \vdots \\ \|x_n\|_{S_i} \end{pmatrix}.
\end{align*}
Next we choose $\epsilon_1, \ldots, \epsilon_{n-1}$ such that the diagonal elements in $D_u$ are all equal, that is
\[
  1 +\frac{1}{\epsilon_1} = \epsilon_{n-1}, \quad 
  \epsilon_1 +\frac{1}{\epsilon_2} = \epsilon_{n-1}, \quad \ldots,
  \epsilon_{n-2} +\frac{1}{\epsilon_{n-1}} = \epsilon_{n-1}.
\]
We can successively eliminate $\epsilon_1, \ldots, \epsilon_{n-2}$ from these equations and obtain
\begin{equation*}
  1   = \epsilon_{n-1} -\cfrac{1}{\epsilon_1} 
      = \epsilon_{n-1} -\cfrac{1}{\epsilon_{n-1} - \cfrac{1}{\epsilon_2} } 
      = \epsilon_{n-1} -\cfrac{1}{\epsilon_{n-1} - \cfrac{1}{\epsilon_{n-1} - \cfrac{1}{\epsilon_3}} }
      = \ldots \ ,
\end{equation*}
which eventually leads to the following equation for $\epsilon_{n-1}$:
\begin{equation} \label{eps}
  1 = \epsilon_{n-1} -\cfrac{1}{\epsilon_{n-1} - \cfrac{1}{\epsilon_{n-1} - \cfrac{1}{\epsilon_{n-1} - \ddots 
         \genfrac{}{}{0pt}{}{}{\cfrac{1}{\epsilon_{n-1}}} }}} \ .
\end{equation}
The right-hand side (\ref{eps}) is a continued fraction of depth $n-1$. It can easily be shown 
that this continued fraction is a rational function in $\epsilon_{n-1}$ of the form $P_n(\epsilon_{n-1})/P_{n-1}(\epsilon_{n-1})$, where $P_j(\epsilon)$ are polynomials of degree $j$, recursively given by
\begin{equation*}
  P_0 (\epsilon) = 1, \quad P_1(\epsilon) = \epsilon,
  \quad \text{and} \quad
  P_{i+1}(\epsilon) = \epsilon \, P_i(\epsilon)-P_{i-1}(\epsilon)
  \quad \text{for } i \ge 1.
\end{equation*}
Therefore, (\ref{eps}) becomes $1 = P_n(\epsilon_{n-1})/P_{n-1}(\epsilon_{n-1})$ or, equivalently,
\[
  \bar{P}_n(\epsilon_{n-1}) = 0 \quad \text{with} \quad
  \bar{P}_j(\epsilon) = P_j(\epsilon) - P_{j-1}(\epsilon).
\]
For the other parameters $\epsilon_1,\ldots,\epsilon_{n-2}$ it follows that
\[
  \epsilon_{n-i} = \frac{P_{i}(\epsilon_{n-1})}{P_{i-1}(\epsilon_{n-1}) }
  \quad \text{for} \ i = 2,\ldots,n-1.
\]
With this setting of the parameters the basic assumption (\ref{epsge1}) is equivalent to the following conditions:
\begin{equation} \label{ege1}
  \bar{P}_i(\epsilon_{n-1}) \ge 0
  \quad \text{and} \quad \epsilon_{n-1} \ge 1.
\end{equation}
To summarize, the parameter $\epsilon_{n-1}$ must be a root of $\bar{P}_n$ satisfying (\ref{ege1}). In the appendix it will be shown that
\[
  \epsilon_{n-1} = 2\cos\left(\frac{\pi}{2n+1}\right),
\]
which is the largest root of $\bar{P}_n$,
is an appropriate choice. Hence
\begin{align*}
\left(\mathcal{M}\mathbf{x},\mathbf{x}\right)_{\mathcal{S}} \leq y^T D_u \, y
= 2\cos\left(\frac{\pi}{2n+1}\right)\inner{\mathbf{x}}{\mathbf{x}}_{\mathcal{S}},
\end{align*}
and, therefore,
\[
  \frac{\left( \mathcal{M}\mathbf{x},\mathbf{x}\right)_{\mathcal{S}}}{\left(\mathbf{x},\mathbf{x}\right)_{\mathcal{S}}} 
  \le 2\cos\left(\frac{\pi}{2n+1}\right) .
\]
Following the same line of arguments a lower bound of $\left(\mathcal{M}\mathbf{x},\mathbf{x}\right)_{\mathcal{S}}$ can be derived,
\begin{align*}
\left(\mathcal{M}\mathbf{x},\mathbf{x}\right)_{\mathcal{S}} \geq y^T D_l \, y,
\end{align*}
where
\begin{align*}
D_l = \begin{pmatrix} 
1-\frac{1}{\epsilon_1} & & & &\\
& -\epsilon_1 -\frac{1}{\epsilon_2}& & &\\
& & \ddots & & \\
& & & -\epsilon_{n-2} -\frac{1}{\epsilon_{n-1}} & \\
& & & & -\epsilon_{n-1}
\end{pmatrix},
\end{align*}
with the same values for $\epsilon_i$ as before. From comparing $D_u$ and $D_l$ it follows that the diagonal elements of $D_l$ are equal to $-2\cos(\pi/(2n+1))$, except for the first element, which has the larger value $2-2\cos\left(\pi/(2n+1)\right)$.
This directly implies 
\begin{align*}
\left(\mathcal{M}\mathbf{x},\mathbf{x}\right)_{\mathcal{S}} \geq \inner{D_l\mathbf{x}}{\mathbf{x}}_{\mathcal{S}}\geq -2\cos\left(\frac{\pi}{2n+1}\right)\inner{\mathbf{x}}{\mathbf{x}}_{\mathcal{S}},
\end{align*}
and, therefore,
\[
  - 2\cos\left(\frac{\pi}{2n+1}\right) \le  \frac{\left( \mathcal{M}\mathbf{x},\mathbf{x}\right)_{\mathcal{S}}}{\left(\mathbf{x},\mathbf{x}\right)_{\mathcal{S}}}.
\]
To summarize we have shown that
\[
   \frac{\vert\left( \mathcal{M}\mathbf{x},\mathbf{x}\right)_{\mathcal{S}}\vert}{\left(\mathbf{x},\mathbf{x}\right)_{\mathcal{S}}} 
   \le 2\cos\left(\frac{\pi}{2n+1}\right),
\]
which completes the proof using (\ref{RQ}).
\end{proof}

For investigating the inverse operator $\mathcal{M}^{-1}$, notice first that $\mathcal{M} = \mathcal{M}_n$, where $\mathcal{M}_j$, $j=1,2,\ldots,n$ are recursively given by
\[
\mathcal{M}_1 = I, \quad
\mathcal{M}_{i+1} =     \begin{pmatrix} 
      \mathcal{M}_i & \mathbf{C}^{*}_i    \\
      \mathbf{C}_i & (-1)^i \, (I - C_i C_i^*)  
    \end{pmatrix}
  \quad \text{with} \ 
   \mathbf{C}_i = \begin{pmatrix} 0 & \ldots &0 &C_i \end{pmatrix}
  \quad  \text{for} \ i \ge 1.
\]
Under the assumptions of Lemma \ref{lem:one} one can show by elementary calculations that $\mathcal{M}_j^{-1}$, $j=1,2,\ldots,n$ exist and satisfy the  following recurrence relation:
\begin{equation}
\label{eq:invRecfirstKind1}
\mathcal{M}^{-1}_1 = I, \quad
\mathcal{M}^{-1}_{i+1} = 
  \begin{pmatrix} 
      \mathcal{M}^{-1}_i & 0 \\
      0 & 0
  \end{pmatrix} + (-1)^i
  \begin{pmatrix} 
      \mathcal{M}^{-1}_i\mathbf{C}^{*}_i\mathbf{C}_i \mathcal{M}^{-1}_i & 
      - \mathcal{M}^{-1}_i\mathbf{C}^{*}_i    \\
      - \mathbf{C}_i\mathcal{M}^{-1}_i & I
  \end{pmatrix}
  \quad  \text{for} \ i \ge 1.
\end{equation}

\begin{theorem}
\label{theo:CInvBound}
The operator $\mathcal{M}$ is invertible and we have
\begin{equation*}
  \norm{\mathcal{M}^{-1}}_{\mathcal{L}\left(\mathbf{X},\mathbf{X}\right)} 
    \leq \frac{1}{2\sin\left(\frac{\pi}{2\left(2n+1\right)}\right)} .
\end{equation*}
\end{theorem}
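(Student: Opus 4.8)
The plan is to exploit that $\mathcal{M}$, and hence $\mathcal{M}^{-1}$, is self-adjoint with respect to the inner product $\left(\cdot,\cdot\right)_{\mathcal{S}}$: indeed $\left(\mathcal{M}\mathbf{x},\mathbf{y}\right)_{\mathcal{S}} = \dualprod{\mathcal{A}\mathbf{x}}{\mathbf{y}}$ is symmetric in $\mathbf{x},\mathbf{y}$ because $\mathcal{A}$ is self-adjoint. The invertibility of $\mathcal{M} = \mathcal{S}^{-1}\mathcal{A}$ is already guaranteed by Lemma \ref{lem:one}, so only the norm bound is at stake. For a bounded self-adjoint operator the operator norm equals the supremum of the absolute Rayleigh quotient, and therefore, just as in (\ref{RQ}),
\[
  \norm{\mathcal{M}^{-1}}_{\mathcal{L}\left(\mathbf{X},\mathbf{X}\right)}
  = \sup_{0\neq\mathbf{x}\in\mathbf{X}} \frac{\vert\left(\mathcal{M}^{-1}\mathbf{x},\mathbf{x}\right)_{\mathcal{S}}\vert}{\left(\mathbf{x},\mathbf{x}\right)_{\mathcal{S}}}.
\]
It thus suffices to bound the quadratic form of $\mathcal{M}^{-1}$ from both sides, in complete analogy with the proof of Theorem \ref{theo:CBound}.

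First I would turn the recurrence (\ref{eq:invRecfirstKind1}) into a telescoping identity for this quadratic form. Writing $\mathbf{x}^{(i)}$ for the truncation of $\mathbf{x}$ to its first $i$ components and using that each $\mathcal{M}^{-1}_i$ is self-adjoint (so the off-diagonal block in (\ref{eq:invRecfirstKind1}) completes a square), one obtains
\[
  \left(\mathcal{M}^{-1}_{i+1}\mathbf{x}^{(i+1)},\mathbf{x}^{(i+1)}\right)_{\mathcal{S}}
  = \left(\mathcal{M}^{-1}_{i}\mathbf{x}^{(i)},\mathbf{x}^{(i)}\right)_{\mathcal{S}}
    + (-1)^i \, \norm{\mathbf{C}_i\mathcal{M}^{-1}_i\mathbf{x}^{(i)} - x_{i+1}}_{S_{i+1}}^2 .
\]
Iterating from $\mathcal{M}^{-1}_1 = I$ yields the exact identity
\[
  \left(\mathcal{M}^{-1}\mathbf{x},\mathbf{x}\right)_{\mathcal{S}}
  = \norm{x_1}_{S_1}^2 + \sum_{i=1}^{n-1}(-1)^i \, \norm{\mathbf{C}_i\mathcal{M}^{-1}_i\mathbf{x}^{(i)} - x_{i+1}}_{S_{i+1}}^2 ,
\]
the direct counterpart of the expansion of $\left(\mathcal{M}\mathbf{x},\mathbf{x}\right)_{\mathcal{S}}$ used in Theorem \ref{theo:CBound}.

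Next I would estimate this alternating sum above and below. The coupling vector $\mathbf{C}_i\mathcal{M}^{-1}_i\mathbf{x}^{(i)}$ involves only the last component of $\mathcal{M}^{-1}_i\mathbf{x}^{(i)}$, and by (\ref{eq:proofCClowNN}) we have $\norm{C_i}\le 1$ in the $\mathcal{S}$-geometry; expanding the squares and applying Young's inequality with free parameters $\delta_i>0$ decouples the cross terms and reduces both estimates to quadratic forms $y^\top \tilde{D}\, y$ in the vector $y = \left(\norm{x_1}_{S_1},\ldots,\norm{x_n}_{S_n}\right)^\top$, exactly as the matrices $D_u$ and $D_l$ arise in Theorem \ref{theo:CBound}. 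Equalizing the diagonal entries of $\tilde{D}$ produces the same depth-$(n-1)$ continued fraction and hence the same polynomials $\bar{P}_n(\epsilon) = P_n(\epsilon) - P_{n-1}(\epsilon)$ built from the Chebyshev polynomials $P_j$ of the second kind. The essential difference is that the extremal value controlling $\norm{\mathcal{M}^{-1}}$ is now the root of $\bar{P}_n$ of smallest modulus, rather than the largest root used for $\norm{\mathcal{M}}$.

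Finally I would identify that smallest root. Substituting $\epsilon = 2\cos\theta$ gives $\bar{P}_n(2\cos\theta) = \cos\left(\tfrac{(2n+1)\theta}{2}\right)\big/\cos\left(\tfrac{\theta}{2}\right)$, so the roots are $2\cos\left(\tfrac{(2k-1)\pi}{2n+1}\right)$ for $k=1,\ldots,n$; the one of least modulus is attained at the odd integer $2k-1$ nearest to $n+\tfrac12$, which sits at distance $\tfrac12$, so that $\theta = \tfrac{\pi}{2}\mp\tfrac{\pi}{2(2n+1)}$ and $|2\cos\theta| = 2\sin\left(\tfrac{\pi}{2(2n+1)}\right)$. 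Taking reciprocals produces the claimed bound, and the scalar instance $S_i = I$, $C_i = I$ of (\ref{eq:CDef}), whose eigenvalues are precisely the roots of $\bar{P}_n$, confirms it is attained. The admissibility of this root and the required facts about $\bar{P}_n$ are deferred to the appendix, exactly as for Theorem \ref{theo:CBound}. I expect the main obstacle to lie in the third step: unlike in Theorem \ref{theo:CBound}, the squared terms $\norm{\mathbf{C}_i\mathcal{M}^{-1}_i\mathbf{x}^{(i)} - x_{i+1}}_{S_{i+1}}^2$ are not expressed directly in the unknowns $\norm{x_i}_{S_i}$ but feed back through the dense inverses $\mathcal{M}^{-1}_i$, so the delicate point is to use $\norm{C_i}\le 1$ together with the completed-square structure to close the recursion and legitimately pass to the scalar tridiagonal form without sacrificing the sharp constant.
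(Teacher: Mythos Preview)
Your setup is sound: the telescoping identity
\[
  \left(\mathcal{M}^{-1}\mathbf{x},\mathbf{x}\right)_{\mathcal{S}}
  = \norm{x_1}_{S_1}^2 + \sum_{i=1}^{n-1}(-1)^i \, \norm{\mathbf{C}_i\mathcal{M}^{-1}_i\mathbf{x}^{(i)} - x_{i+1}}_{S_{i+1}}^2
\]
is exactly where the paper starts, and your identification of the smallest-modulus root of $\bar{P}_n$ with $2\sin\!\big(\pi/(2(2n+1))\big)$ is correct. The gap is the step you yourself flag as the ``main obstacle'': the Young-inequality strategy from Theorem~\ref{theo:CBound} does not transfer. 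In Theorem~\ref{theo:CBound} the cross terms are $\inner{C_i^* x_{i+1}}{x_i}_{S_i}$, coupling only consecutive blocks, so Young produces a genuinely \emph{diagonal} matrix in $y$. Here the cross terms are $\inner{\mathbf{C}_i\mathcal{M}_i^{-1}\mathbf{x}^{(i)}}{x_{i+1}}_{S_{i+1}}$, and $\mathcal{M}_i^{-1}$ is dense in the block index; Young with parameters $\delta_i$ leaves you with $\norm{\mathbf{C}_i\mathcal{M}_i^{-1}\mathbf{x}^{(i)}}_{S_{i+1}}^2$, which is not a multiple of any single $\norm{x_j}_{S_j}^2$. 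So no diagonal $\tilde{D}$ materializes, the continued-fraction equalization never gets off the ground, and the sharp constant is not reached by this route.

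The paper's actual argument avoids Young entirely for the inverse. It combines the one-sided estimates coming directly from your identity with an opposite-direction bound obtained by re-expanding $\mathcal{M}_i^{-1} + (-1)^i \mathcal{M}_i^{-1}\mathbf{C}_i^*\mathbf{C}_i\mathcal{M}_i^{-1}$ via (\ref{eq:invRecfirstKind1}) at level $i-1$; this exposes a positive semi-definite piece (built from $I - C_i^* C_i \ge 0$) that can be dropped with the correct sign. The remaining cross term $(\mathbf{C}_i\mathcal{M}_i^{-1}\mathbf{x}^{(i)},x_{i+1})_{S_{i+1}}$ is then controlled by the recursive bound
\[
  \norm{\mathbf{C}_i\mathcal{M}_i^{-1}\mathbf{x}^{(i)}}_{S_{i+1}} \le \sum_{j=1}^{i}\norm{x_j}_{S_j},
\]
which follows from $\norm{C_i}\le 1$ and $\mathbf{C}_i\mathcal{M}_i^{-1} = \pm C_i\begin{pmatrix}-\mathbf{C}_{i-1}\mathcal{M}_{i-1}^{-1} & I\end{pmatrix}$. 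This produces a \emph{dense} (not diagonal) comparison matrix $Q_n$ in $y$; the sharp constant appears only after recognizing that $Q_n^{-1}$ is tridiagonal with characteristic polynomial $\bar{P}_n$, whence $\norm{Q_n}$ equals the reciprocal of the smallest root. In short, the connection to $\bar{P}_n$ enters through the \emph{inverse} of the comparison matrix, not through an equalized diagonal as in Theorem~\ref{theo:CBound}.
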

\begin{proof}\unskip
Let $\mathbf{x} \in \mathbf{X} = X_1\times \ldots \times X_n$ with components $x_i  \in X_i$. The restriction of $\mathbf{x}$ to its first $j$ components is denoted by $\mathbf{x}_j \in X_1\times \ldots \times X_j$.

From (\ref{eq:invRecfirstKind1}) we obtain
\begin{align*}
 (\mathcal{M}_{i+1}^{-1} \mathbf{x}_{i+1},\mathbf{x}_{i+1})_{\mathcal{S}_{i+1}}
   & = (\mathcal{M}_i^{-1} \mathbf{x}_i,\mathbf{x}_i)_{\mathcal{S}_i} 
       + (-1)^i \, \|\mathbf{C}_i\mathcal{M}^{-1}_i \mathbf{x}_i - x_{i+1}\|_{S_{i+1}}^2,
\end{align*}
which implies that
\begin{equation} \label{oneside}
   (\mathcal{M}_{i+1}^{-1} \mathbf{x}_{i+1},\mathbf{x}_{i+1})_{\mathcal{S}_{i+1}}
     \begin{cases}
       \le  (\mathcal{M}_i^{-1} \mathbf{x}_i,\mathbf{x}_i)_{\mathcal{S}_i} & \text{for odd} \ i , \\ 
       \ge  (\mathcal{M}_i^{-1} \mathbf{x}_i,\mathbf{x}_i)_{\mathcal{S}_i} & \text{for even} \ i .
     \end{cases}
\end{equation}
For estimates in the opposite direction observe that
(\ref{eq:invRecfirstKind1}) also implies that
\begin{equation} \label{oppdir}
\begin{aligned}
 (\mathcal{M}_{i+1}^{-1} \mathbf{x}_{i+1},\mathbf{x}_{i+1})_{\mathcal{S}_{i+1}}
   & = ([\mathcal{M}_i^{-1} + (-1)^i \, \mathcal{M}^{-1}_i\mathbf{C}^{*}_i\mathbf{C}_i \mathcal{M}^{-1}_i] \,  \mathbf{x}_i,\mathbf{x}_i)_{\mathcal{S}_i} \\
   & \quad {} + (-1)^i \, \left[ \norm{x_{i+1}}_{S_{i+1}}^2 -  2 \, (\mathbf{C}_i\mathcal{M}^{-1}_i \mathbf{x}_i,x_{i+1})_{S_{i+1}} \right].
\end{aligned}
\end{equation}
For the first term on the right-hand side of (\ref{oppdir}) observe that
\begin{equation*}
  \mathcal{M}^{-1}_i 
   + (-1)^i \, \mathcal{M}^{-1}_i \mathbf{C}^{*}_i \mathbf{C}_i \mathcal{M}^{-1}_i 
    = \begin{pmatrix}
        \mathcal{M}_{i-1}^{-1} & 0 \\ 0 & 0
      \end{pmatrix}  
        + (-1)^{i-1} \, \mathcal{P}_{i}
\end{equation*}
with
\[
  \mathcal{P}_i = \begin{pmatrix}
      \mathcal{M}_{i-1}^{-1} \mathbf{C}_{i-1}^* (I -C_i^* \, C_i) \mathbf{C}_{i-1} \mathcal{M}_{i-1}^{-1}  &
      -\mathcal{M}_{i-1}^{-1} \mathbf{C}_{i-1}^* (I -C_i^* \, C_i)\\
      - (I -C_i^* \, C_i) \mathbf{C}_{i-1} \mathcal{M}_{i-1}^{-1} &  (I -C_i^* \, C_i)
    \end{pmatrix},
\]
which easily follows by using (\ref{eq:invRecfirstKind1}) with $i$ replaced by $i-1$. The operator $\mathcal{P}_i$ is positive semi-definite, since
\begin{equation*}
  ( \mathcal{P}_i \mathbf{x}_i, \mathbf{x}_i )_{\mathcal{S}_i} 
   = ([I -C_i^* \, C_i](\mathbf{C}_{i-1} \mathcal{M}_{i-1}^{-1} \mathbf{x}_{i-1} - x_i),
                        \mathbf{C}_{i-1} \mathcal{M}_{i-1}^{-1} \mathbf{x}_{i-1} - x_i)_{S_i} \ge 0.
\end{equation*}
Therefore
\[
  ([\mathcal{M}^{-1}_i 
   + (-1)^i \, \mathcal{M}^{-1}_i \mathbf{C}^{*}_i \mathbf{C}_i \mathcal{M}^{-1}_i] \mathbf{x}_i,\mathbf{x}_i)_{\mathcal{S}_i}
   \begin{cases} 
     \ge (\mathcal{M}_{i-1}^{-1} \mathbf{x}_{i-1},\mathbf{x}_{i-1})_{\mathcal{S}_{i-1}} & \text{for odd} \ i, \\
     \le (\mathcal{M}_{i-1}^{-1} \mathbf{x}_{i-1},\mathbf{x}_{i-1})_{\mathcal{S}_{i-1}} & \text{for even} \ i.
     \end{cases}
\]
Then it follows from (\ref{oppdir}) that for odd $i$
\[
  (\mathcal{M}_{i+1}^{-1} \mathbf{x}_{i+1} , \mathbf{x}_{i+1})_{\mathcal{S}_{i+1}}
   \ge (\mathcal{M}_{i-1}^{-1} \mathbf{x}_{i-1},\mathbf{x}_{i-1})_{\mathcal{S}_{i-1}}
     + 2 (\mathbf{C}_i \mathcal{M}_i^{-1} \mathbf{x}_i,x_{i+1})_{S_{i+1}} - \|x_{i+1}\|_{S_{i+1}}^2
\]
and for even $i$
\[
  (\mathcal{M}_{i+1}^{-1} \mathbf{x}_{i+1} , \mathbf{x}_{i+1})_{\mathcal{S}_{i+1}}
   \le (\mathcal{M}_{i-1}^{-1} \mathbf{x}_{i-1},\mathbf{x}_{i-1})_{\mathcal{S}_{i-1}}
     - 2 (\mathbf{C}_i \mathcal{M}_i^{-1} \mathbf{x}_i,x_{i+1})_{S_{i+1}} + \|x_{i+1}\|_{S_{i+1}}^2 .
\]
In order to estimate $(\mathbf{C}_i \mathcal{M}_i^{-1} \mathbf{x}_i,x_{i+1})_{S_{i+1}}$  observe that
\[
  \mathbf{C}_i \mathcal{M}_i^{-1} 
     = -(-1)^i \, C_i \, 
       \begin{pmatrix}
        -\mathbf{C}_{i-1} \mathcal{M}_{i-1}^{-1} &  I
      \end{pmatrix} ,
\]
which is obtained from (\ref{eq:invRecfirstKind1}) with $i$ replaced by $i-1$ by multiplying with $\mathbf{C}_i$ from the left. Therefore,
\begin{align*}
  \|\mathbf{C}_i \mathcal{M}_i^{-1} \mathbf{x}_i\|_{S_{i+1}}
     & \le \| C_i \mathbf{C}_{i-1} \mathcal{M}_{i-1}^{-1} \mathbf{x}_{i-1}\|_{S_{i+1}} + \|C_i x_i\|_{S_{i+1}} \\
     & \le \|\mathbf{C}_{i-1} \mathcal{M}_{i-1}^{-1} \mathbf{x}_{i-1}\|_{S_i} + \|x_i\|_{S_i},
\end{align*}
which recursively applied eventually leads to
\[
  \|\mathbf{C}_i \mathcal{M}_i^{-1} \mathbf{x}_i\|_{S_{i+1}} 
    \le \sum_{j=1}^i \|x_j\|_{S_j} .
\]
Hence
\[
  \left|(\mathbf{C}_i \mathcal{M}_i^{-1} \mathbf{x}_i,x_{i+1})_{S_{i+1}}\right|
  \le  \sum_{j=1}^i \|x_j\|_{S_j} \, \|x_{i+1}\|_{S_{i+1}}.
\]
Using this estimate, we obtain for odd $i$, 
\begin{align*}
  (\mathcal{M}_{i+1}^{-1} \mathbf{x}_{i+1} , \mathbf{x}_{i+1})_{\mathcal{S}_{i+1}}
   & \ge (\mathcal{M}_{i-1}^{-1} \mathbf{x}_{i-1},\mathbf{x}_{i-1})_{\mathcal{S}_{i-1}}
     - 2 \sum_{j=1}^i \|x_j\|_{S_j} \, \|x_{i+1}\|_{S_{i+1}} - \|x_{i+1}\|_{S_{i+1}}^2 \\
   &  =  (\mathcal{M}_{i-1}^{-1} \mathbf{x}_{i-1},\mathbf{x}_{i-1})_{\mathcal{S}_{i-1}}
     + y_{i+1}^T L_{i+1} \, y_{+1}.
\end{align*}
where $y_j= (\norm{x_1}_{S_1},\norm{x_2}_{S_2},\ldots,\norm{x_j}_{S_j})^T$ and $L_ {i+1}$ is the $(i+1)\times(i+1)$ matrix whose only nonzero entries are $-1$ in the last row and last column. 

Applying this estimate recursively, eventually leads to 
\[
  (\mathcal{M}_j^{-1} \mathbf{x}_j , \mathbf{x}_j)_{\mathcal{S}_j}
   \ge y_j^T Q_j \, y_j ,
\]
where $Q_j$, $j=2,4,6,\ldots$ are given by the recurrence relation
\[
  Q_2 = \begin{pmatrix}  0 & -1 \\ -1 & -1\end{pmatrix}, \quad 
  Q_{i+2} = 
  \left(\begin{array}{@{}c|cc@{}} 
     Q_{i} & \begin{matrix} 0 \\ \vdots \end{matrix} & \begin{matrix} -1 \\ \vdots \end{matrix} \\ \hline
     \begin{matrix} 0 & \cdots \end{matrix} & 0 & -1\\ 
     \begin{matrix} -1 & \cdots \end{matrix} & -1 & -1
     \end{array}\right)
  \quad \text{for} \ i = 2,4,\ldots .
\]
Therefore,
\[
  (\mathcal{M}_j^{-1} \mathbf{x}_j , \mathbf{x}_j)_{\mathcal{S}_j}
   \ge - \|Q_j\| \, \inner{\mathbf{x}_j}{\mathbf{x}_j}_{\mathcal{S}_j}
   \quad \text{for even} \ j,
\]
where $\|Q_j\|$ denotes the spectral norm of $Q_j$.
It follows analogously that
\[
  (\mathcal{M}_j^{-1} \mathbf{x}_j , \mathbf{x}_j)_{\mathcal{S}_j}
   \le \|Q_j\| \, \inner{\mathbf{x}_j}{\mathbf{x}_j}_{\mathcal{S}_j}
   \quad \text{for odd} \ j,
\]
where $Q_j$, $j=1,3,5,\ldots$ are given by the recurrence relation
\[
  Q_1 = 1, \quad 
  Q_{i+2} = 
  \left(\begin{array}{@{}c|cc@{}} 
     Q_{i} & \begin{matrix} 0 \\ \vdots \end{matrix} & \begin{matrix} 1 \\ \vdots \end{matrix} \\ \hline
     \begin{matrix} 0 & \cdots \end{matrix} & 0 & 1\\ 
     \begin{matrix} 1 & \cdots \end{matrix} & 1 & 1
     \end{array}\right)
  \quad \text{for} \ i = 1,3,\ldots
\]
Together with (\ref{oneside}) it follows for odd $i$ that
\begin{equation*}
   - \|Q_{i+1}\| \, \inner{\mathbf{x}_{i+1}}{\mathbf{x}_{i+1}}_{\mathcal{S}_{i+1}} 
     \le (\mathcal{M}_{i+1}^{-1} \mathbf{x}_{i+1},\mathbf{x}_{i+1})_{\mathcal{S}_{i+1}}
     \le (\mathcal{M}_i^{-1} \mathbf{x}_i,\mathbf{x}_i)_{\mathcal{S}_i}
     \le \|Q_i\| \, \inner{\mathbf{x}_i}{\mathbf{x}_i}_{\mathcal{S}_i},
\end{equation*}
and for even $i$ that
\begin{equation*}
   - \|Q_i\| \, \inner{\mathbf{x}_i}{\mathbf{x}_i}_{\mathcal{S}_i} 
     \le (\mathcal{M}_i^{-1} \mathbf{x}_i,\mathbf{x}_i)_{\mathcal{S}_i}
     \le (\mathcal{M}_{i+1}^{-1} \mathbf{x}_{i+1},\mathbf{x}_{i+1})_{\mathcal{S}_{i+1}}
     \le \|Q_{i+1}\| \, \inner{\mathbf{x}_{i+1}}{\mathbf{x}_{i+1}}_{\mathcal{S}_{i+1}}.
\end{equation*}
So in both cases we obtain
\[
   \frac{|(\mathcal{M}_{i+1}^{-1} \mathbf{x}_{i+1},\mathbf{x}_{i+1})_{\mathcal{S}_{i+1}}|}%
   {\inner{\mathbf{x}_{i+1}}{\mathbf{x}_{i+1}}_{\mathcal{S}_{i+1}}} \le \max(\|Q_i\|,\|Q_{i+1}\|)
   \quad \foralls \ \mathbf{x}_{i+1} \neq 0.
\]
Since
\[
  \|Q_j\| = \frac{1}{2\sin\left(\frac{\pi}{2\left(2 j+1\right)}\right)},
\]
see the appendix, the proof is completed.
\end{proof}

A direct consequence of Theorems \ref{theo:CBound} and \ref{theo:CInvBound} is the following corollary.

\begin{corollary}
\label{corollary:condNumberNN}
Under the assumptions of Lemma \ref{lem:one} the block operators $\mathcal{A}$ and $\mathcal{M}$, given by (\ref{eq:ADefNN}) and (\ref{eq:CDef}), are  isomorphisms from $\mathbf{X}$ to $\mathbf{X}'$ and from $\mathbf{X}$ to $\mathbf{X}$, respectively. Moreover, the following condition number estimate holds
\begin{equation*}
  \kappa\left(\mathcal{A}\right)= \kappa\left(\mathcal{M}\right)
    \leq \frac{\cos\left(\frac{\pi}{2n+1}\right)}{\sin\left(\frac{\pi}{2\left(2n+1\right)}\right)} .
\end{equation*}
\end{corollary}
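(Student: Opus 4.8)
The corollary is a matter of assembling the two preceding theorems, so my plan is mostly bookkeeping. First I would dispatch the isomorphism claim. Lemma \ref{lem:one} already asserts that $\mathcal{A}$ is an isomorphism from $\mathbf{X}$ to $\mathbf{X}'$ under the stated hypotheses. Since $\mathcal{S}$ is bounded, self-adjoint and positive definite, the Riesz representation theorem makes $\mathcal{S} \colon \mathbf{X} \rightarrow \mathbf{X}'$ an isomorphism, so that $\mathcal{M} = \mathcal{S}^{-1}\mathcal{A}$ is an isomorphism from $\mathbf{X}$ to $\mathbf{X}$; Theorem \ref{theo:CInvBound} confirms the invertibility of $\mathcal{M}$ independently. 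The identity $\kappa(\mathcal{A}) = \kappa(\mathcal{M})$ was already recorded in the discussion preceding the theorems, so I would simply invoke it rather than re-derive it.

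For the quantitative bound I would combine the two operator-norm estimates. By definition $\kappa(\mathcal{M}) = \norm{\mathcal{M}}_{\mathcal{L}\left(\mathbf{X},\mathbf{X}\right)} \, \norm{\mathcal{M}^{-1}}_{\mathcal{L}\left(\mathbf{X},\mathbf{X}\right)}$. Theorem \ref{theo:CBound} bounds the first factor by $2\cos(\pi/(2n+1))$ and Theorem \ref{theo:CInvBound} bounds the second by $\frac{1}{2\sin(\pi/(2(2n+1)))}$. Multiplying the two, the shared factor of $2$ cancels and one is left precisely with $\cos(\pi/(2n+1))/\sin(\pi/(2(2n+1)))$, which is the claimed estimate.

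There is really no obstacle here: all the analytic difficulty --- the Cauchy--Young estimates leading to the continued fraction in the bound on $\norm{\mathcal{M}}$, and the recursive analysis of $\mathcal{M}^{-1}$ culminating in the spectral norms $\|Q_j\|$ --- was already carried out inside Theorems \ref{theo:CBound} and \ref{theo:CInvBound}. The corollary only needs the product of the two bounds together with the clean cancellation of the factor $2$. If anything warrants a remark, it is that the resulting bound depends on neither the operators $A_i$, $B_i$ nor on any hidden parameters, and on $n$ only through the two trigonometric expressions; this is exactly the robustness feature the paper is after, but establishing it requires nothing beyond reading off the final formula.
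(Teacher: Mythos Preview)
Your proposal is correct and matches the paper's approach: the paper simply states that the corollary is a direct consequence of Theorems \ref{theo:CBound} and \ref{theo:CInvBound} without giving a separate proof, and your write-up spells out exactly this assembly (invoking Lemma \ref{lem:one} and the Riesz isomorphism for the isomorphism claims, then multiplying the two norm bounds with the factor of $2$ cancelling).
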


\begin{remark}
For $n=2$ we have
\[
  2 \sin \left( \frac{\pi}{10} \right) = \frac{1}{2}(\sqrt{5}-1) 
  \quad \text{and} \quad
  2 \cos \left( \frac{\pi}{5} \right) = \frac{1}{2}(\sqrt{5}+1),
\]
and, therefore,
\begin{equation*}
  \kappa\left(\mathcal{M}\right) \le \frac{\sqrt{5}+1}{\sqrt{5}-1} =  \frac{3+\sqrt{5}}{2}.
\end{equation*}
This result is well known for finite dimensional spaces, see \cite{kuznetsov95},  \cite{murphy2000note}.
\end{remark}

In \cite{kuznetsov95},  \cite{murphy2000note} it was also shown for the case  $n=2$ and $A_2 = 0$ that $\mathcal{M}$ has only 3 eigenvalues:
\[
  \left\{-\frac{1}{2}(\sqrt{5}-1), 1,  \frac{1}{2}(\sqrt{5}+1) \right\}.
\]
This result can also be extended for $n \ge 2$ and for general Hilbert spaces.

\begin{theorem}
\label{theo:exactEigV}
If the assumptions of Lemma \ref{lem:one} hold and if, additionally, $A_i=0$, for $i=2,\ldots,n$, then 
the set $\sigma_p(\mathcal{M})$ of all eigenvalues of $\mathcal{M}$, is given by
\begin{equation*}
 \sigma_p(\mathcal{M}) = \left\lbrace  2\cos\left(\frac{2i-1}{2j+1}\pi\right) \colon j = 1,\ldots,n, \ i = 1,\ldots, j \right\rbrace.
\end{equation*}
Moreover,
\[
  \|\mathcal{M}\|_{\mathcal{L}(\mathbf{X},\mathbf{X})} = \max \{|\lambda| : \lambda \in  \sigma_p(\mathcal{M}) \} = 2\cos\left(\frac{\pi}{2n+1}\right)
\]
and
\[
  \|\mathcal{M}^{-1}\|_{\mathcal{L}(\mathbf{X},\mathbf{X})} = \max \left\{\frac{1}{|\lambda|} : \lambda \in  \sigma_p(\mathcal{M}) \right\}
   = \frac{1}{2\sin\left(\frac{\pi}{2\left(2n+1\right)}\right)}.
\]
So, equality is attained in the estimates of Theorems \ref{theo:CBound}, \ref{theo:CInvBound}, and Corollary \ref{corollary:condNumberNN}. All estimates are sharp.
\end{theorem}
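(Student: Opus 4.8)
The plan is to exploit the hypothesis $A_i=0$ ($i\ge 2$) to reduce $\mathcal{M}$ to a purely off-diagonal block tridiagonal form, then to pin down $\sigma_p(\mathcal{M})$ by a forward and a reverse inclusion, and finally to read off the two operator norms. For the reduction, note that (\ref{eq:SchurCompCom}) with $A_{i+1}=0$ gives $B_i S_i^{-1}B_i'=S_{i+1}$, so that $C_iC_i^{*}=S_{i+1}^{-1}B_iS_i^{-1}B_i'=I$ on $X_{i+1}$ for $i=1,\ldots,n-1$. Hence every diagonal block $I-C_iC_i^{*}$ in (\ref{eq:CDef}) vanishes, and $\mathcal{M}$ becomes block tridiagonal with diagonal $(I,0,\ldots,0)$ and off-diagonal blocks $C_i,C_i^{*}$. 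Throughout I would use the two facts that each $C_i^{*}$ is an $(\cdot,\cdot)_{\mathcal{S}}$-isometry (because $C_iC_i^{*}=I$) and that $\mathcal{M}$ is self-adjoint with respect to $(\cdot,\cdot)_{\mathcal{S}}$.

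\textbf{Forward inclusion.} Take $\mathcal{M}\mathbf{x}=\lambda\mathbf{x}$ with $\mathbf{x}\neq 0$ and transport every component up to $X_n$ by setting $\eta_i=C_{n-1}C_{n-2}\cdots C_i\,x_i\in X_n$ (with $\eta_n=x_n$). Applying $C_{n-1}\cdots C_i$ to the $i$-th block row of the eigenvalue equation and using $C_iC_i^{*}=I$ collapses the system to the three-term recurrence $\eta_{i+1}=\lambda\eta_i-\eta_{i-1}$, together with the boundary relations $\eta_2=(\lambda-1)\eta_1$ (first row) and $\eta_{n-1}=\lambda\eta_n$ (last row). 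Solving gives $\eta_i=\bar P_{i-1}(\lambda)\,\eta_1$, where $\bar P_j$ are the polynomials from the proof of Theorem \ref{theo:CBound}, and the terminal relation forces $\bar P_n(\lambda)\,\eta_1=0$. This yields a dichotomy: either $\eta_1\neq 0$, so $\lambda$ is a root of $\bar P_n$; or $\eta_1=0$, in which case all $\eta_i$ vanish, in particular $x_n=0$, and $(x_1,\ldots,x_{n-1})\neq 0$ is an eigenvector of $\mathcal{M}_{n-1}$ for the same $\lambda$. Induction on $n$ then gives $\sigma_p(\mathcal{M})\subseteq\bigcup_{j=1}^n\{\text{roots of }\bar P_j\}$, and an elementary computation (deferred to the appendix) identifies these roots as $2\cos((2i-1)\pi/(2j+1))$, $i=1,\ldots,j$.

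\textbf{Reverse inclusion and the main obstacle.} Conversely, I would realize each root $\lambda$ of $\bar P_j$ by an explicit eigenvector built from an isometric chain: choosing $u\neq 0$ and setting $x_i=\bar P_{i-1}(\lambda)\,C_i^{*}C_{i+1}^{*}\cdots C_{j-1}^{*}u$ for $i\le j$ and $x_i=0$ for $i>j$, a direct check using $C_iC_i^{*}=I$ shows $\mathcal{M}\mathbf{x}=\lambda\mathbf{x}$: the interior rows reproduce the recurrence $\bar P_i=\lambda\bar P_{i-1}-\bar P_{i-2}$, the first row uses $\bar P_1=(\lambda-1)\bar P_0$, and the last active row closes because $\bar P_j(\lambda)=0$. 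For $j=n$ any $u\neq 0$ in $X_n$ works, so the whole family $\{\text{roots of }\bar P_n\}$ is attained unconditionally. For $j<n$, however, the row just below the support forces $C_ju=0$, so the constructed vector is nonzero only if $u$ can be taken in $\ker C_j$. I expect this to be the delicate point: the lower families $j<n$ appear precisely when the kernels $\ker C_j$ are nontrivial, as is the case in the function-space settings of Section \ref{sec:OCPoissonProblem}.

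\textbf{Norms and sharpness.} Finally, since $\mathcal{M}$ is $(\cdot,\cdot)_{\mathcal{S}}$-self-adjoint, each eigenvalue obeys $|\lambda|\le\|\mathcal{M}\|$ and $1/|\lambda|\le\|\mathcal{M}^{-1}\|$. Among the roots of all $\bar P_j$, the one of largest modulus is the largest root $2\cos(\pi/(2n+1))$ of $\bar P_n$, while the one of smallest modulus is the root of $\bar P_n$ nearest $\pi/2$, of modulus $2\sin(\pi/(2(2n+1)))$; both lie in the always-present family $j=n$, so they are genuine eigenvalues irrespective of the kernel issue above. The resulting lower bounds $\|\mathcal{M}\|\ge 2\cos(\pi/(2n+1))$ and $\|\mathcal{M}^{-1}\|\ge 1/(2\sin(\pi/(2(2n+1))))$, combined with the upper bounds of Theorems \ref{theo:CBound} and \ref{theo:CInvBound}, give the two norm identities and hence the sharpness of all three estimates. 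This forward inclusion and these norm identities are essentially mechanical once the transport map $\eta_i$ and the self-adjointness of $\mathcal{M}$ are in place; the real work, and the only place where the hypotheses must be used with care, is the reverse inclusion above.
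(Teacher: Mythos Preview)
Your argument parallels the paper's but is organized a little differently. The paper performs an in-place forward elimination, deriving $C_j^{*}x_{j+1} = \bigl(\bar P_j(\lambda)/\bar P_{j-1}(\lambda)\bigr)\,x_j$ directly in the original spaces, whereas you transport all components to $X_n$ via $\eta_i = C_{n-1}\cdots C_i\,x_i$ and obtain a clean scalar recurrence there; both routes arrive at the same polynomials $\bar P_j$. Your inductive reduction to $\mathcal{M}_{n-1}$ when $x_n=0$ handles the forward inclusion more transparently than the paper's treatment.

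You are right to flag the kernel condition as the delicate point of the reverse inclusion. The paper simply asserts that each root of $\bar P_j$ is an eigenvalue ``by setting $x_{j+1}=\cdots=x_n=0$'', but this silently requires the $(j{+}1)$-st row, namely $C_j x_j = 0$, to be satisfiable with $x_j\neq 0$, i.e.\ $\ker C_j\neq\{0\}$. So the stated description of $\sigma_p(\mathcal{M})$ can fail when some $C_j$ is injective (take all $X_i$ one-dimensional for a minimal counterexample). What you correctly isolate is that this does not touch the theorem's substance: the extremal eigenvalues both belong to the $j=n$ family, which is realized for any nonzero $u\in X_n$ without any kernel condition, so the identities for $\|\mathcal{M}\|$ and $\|\mathcal{M}^{-1}\|$ and the sharpness of Theorems~\ref{theo:CBound}, \ref{theo:CInvBound} and Corollary~\ref{corollary:condNumberNN} hold unconditionally. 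Your proof is therefore sound on the main claims and in fact more careful than the paper's on the auxiliary one.
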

\begin{proof}\unskip
Since $A_i=0$, for $i=2,\ldots,n$ it follows that $C_iC^{*}_i = I$ and the block operator $\mathcal{M}$ simplifies to
  \begin{equation*}
    \mathcal{M} = 
    \begin{pmatrix} 
    I & C^{*}_1  &  &\\
    C_1 & 0  & \ddots &\\
       & \ddots& \ddots& C^{*}_{n-1}\\
        & & C_{n-1}   & 0
  \end{pmatrix}.
\end{equation*}
The eigenvalue problem $\mathcal{M} \mathbf{x} = \lambda \, \mathbf{x}$ reads in details:
\begin{align*}
x_1 +C^{*}_{1}x_2 &= \lambda x_1,\\
C_1x_1 +C^{*}_{2}x_3 &= \lambda x_2,\\
&\vdots\\
C_{n-2}x_{n-2} +C^{*}_{n-1}x_{n} &= \lambda x_{n-1},\\
C_{n-1}x_{n-1} &= \lambda x_{n}.
\end{align*}
From the first equation
\begin{equation*} 
  C^{*}_{1} x_2 = \bar{P}_1(\lambda) \, x_1 ,
  \quad \text{where} \quad \bar{P}_1(\lambda) = \lambda - 1,
\end{equation*}
we conclude that the root  $\lambda_{11} = 1$ of $\bar{P}_1(\lambda)$ is an eigenvalue by setting $x_2 = x_3 = \ldots = x_n = 0$. If $\lambda \neq \lambda_{11}$, then we obtain from the second equation by eliminating $x_1$:
\[
  C^{*}_{2}x_3 
    = C_1 x_1 - \lambda \, x_2 
    = \frac{1}{\bar{P}_1(\lambda)} \,  C_1 C_1^* x_2 - \lambda \, x_2 
    = \frac{1}{\bar{P}_1(\lambda)} \,  x_2 - \lambda \, x_2 
    = R_2(\lambda) \, x_2,
\]
where
\[ 
  R_2(\lambda) = \lambda -\frac{1}{\bar{P}_1(\lambda)} = \frac{\bar{P}_2(\lambda)}{\bar{P}_1(\lambda)}
  \quad   \text{with} \quad \bar{P}_2(\lambda) = \lambda \, \bar{P}_1(\lambda) -1.
\]
We conclude that the two roots $\lambda_{21}$ and $\lambda_{22}$ of the polynomial $\bar{P}_2(\lambda)$ of degree 2 are eigenvalues by setting $x_3 = \ldots = x_n = 0$.
Repeating this procedure gives
\begin{align*} 
  C^{*}_{j}x_{j+1} = R_j(\lambda) \,  x_j,
  \text{ for } j=2,\ldots,n-1,
  \text{ and }  
  0 = R_n(\lambda) \, x_n
  \text{ with } R_j(\lambda) = \frac{\bar{P}_j(\lambda)}{\bar{P}_{j-1}(\lambda)},
\end{align*}
where the polynomials $\bar{P}_j(\lambda)$ are recursively given by
\[
  \bar{P}_0(\lambda) = 1,\quad
  \bar{P}_1(\lambda) = \lambda - 1, \quad 
  \bar{P}_{i+1}(\lambda) = \lambda \, \bar{P}_i(\lambda) - \bar{P}_{i-1}(\lambda)
  \quad \text{for} \, i \ge 1.
\]
So the eigenvalues of $\mathcal{M}$ are the roots of the polynomials $\bar{P}_1\left(\lambda \right),\bar{P}_2(\lambda),\ldots,\bar{P}_n\left(\lambda \right)$.
For the roots of $\bar{P}_j\left(\lambda \right)$ we obtain
\begin{equation*}
 \lambda  = 2 \,\cos\left(\frac{2i-1}{2j+1}\pi\right), \quad \text{for } i= 1,\ldots,j,
\end{equation*}
see Lemma \ref{lem:CheByRootsandRealProblem}. It is easy to see that
\[
  \max \{|\lambda| : \lambda \in  \sigma_p(\mathcal{M}) \} = 2\cos\left(\frac{\pi}{2n+1}\right).
\]
Therefore, with Theorem \ref{theo:CBound} it follows that
\[
  2\cos\left(\frac{\pi}{2n+1}\right) \ge \|\mathcal{M}\|_{\mathcal{L}(\mathbf{X},\mathbf{X})}
    \ge \max \{|\lambda| : \lambda \in  \sigma_p(\mathcal{M}) \} = 2\cos\left(\frac{\pi}{2n+1}\right),
\]
which implies equality. An analogous argument applies to $\mathcal{M}^{-1}$.
\end{proof}

\section{Application: Optimization problems in function space}
\label{sec:OCPoissonProblem}

In this section we apply the theory from Section \ref{sec:Theory} to optimization problems in function spaces with an elliptic partial differential equation as constraint. First we present a standard model problem. Then we look at two more challenging variations of the model problem in Subsections \ref{subsec:limitedCntl} and \ref{subsec:limitedObs}.

\subsection{Distributed observation and distributed control}
\label{subsec:distrObsdistrCntl}

We start with the following model problem: 
Find $u \in U$ and $f \in F = L^2(\Omega)$ which minimizes the objective functional
  \begin{align*}
    \frac12 \normTwo{u-d}_{\lTwo} + \frac{\alpha}{2}\normTwo{f}_{\lTwo}  
  \end{align*}
subject to the constraint
  \begin{align*}
    -\Delta u + f &= 0\quad \text{in} \quad \Omega,\\
     u &= 0 \quad \text{on} \quad \partial\Omega,
  \end{align*}
where $\Omega$ is a bounded open subset of $\mathbb{R}^d$ with Lipschitz boundary $\partial \Omega$, $\Delta$ is the Laplacian operator, $d\in \lTwo$, $\alpha > 0$ are given data, and $U \subset \lTwo$ is a Hilbert space. Here
$L^2(\Omega)$ denotes the standard Lebesgue space of square-integrable functions on $\Omega$ with inner product $\inner{\cdot}{\cdot}_{L^2(\Omega)}$ and norm $\norm{\cdot}_{L^2(\Omega)}$.

This problem can be seen either as an inverse problem for identifying $f$ from the data $d$, or as an optimal control problem with state $u$, control $f$, and the desired state $d$. In the first case the parameter $\alpha$ is a regularization parameter, in the second case a cost parameter.
Throughout this paper, we adopt the terminology of an optimal control problem and call $U$ the state space and $F$ the control space.

We discuss now the construction of preconditioners for the associated optimality system such that the condition number of the preconditioned system is bounded independently of $\alpha$.
We will call such preconditioners $\alpha$-robust. This is of particular interest in the context of inverse problems, where $\alpha$ is typically small, in which case the unpreconditioned operator becomes severely ill-posed. 

The problem is not yet fully specified. We need a variational formulation for the constraint, which will eventually lead to the definition of the state space $U$.

The most natural way is to use the standard weak formulation with $U =H_0^1(\Omega)$:
  \begin{align*}
     \innerL{\nabla u}{\nabla w} +\innerL{f}{w} &= 0\quad \foralls w\in W = H_0^1(\Omega),
  \end{align*}
where $\nabla$ denotes the gradient of a scalar function.
Here we use $H^m(\Omega)$ ($H_0^m(\Omega)$) to denote the standard Sobolev spaces of functions on $\Omega$ (with vanishing trace) with associated norm $\norm{\cdot}_{H^m(\Omega)}$ ($|\cdot|_{H^m(\Omega)}$) .
This problem is well studied, see, for example, \cite{troltzsch2010optimal}. 

Other options for the state equation are the very weak form in the following sense: $U = L^2(\Omega)$ and
  \begin{align*}
    - \innerL{u}{\Delta w}  +\innerL{f}{w} &= 0\quad \foralls w\in W = H^2(\Omega) \cap H_0^1(\Omega),
  \end{align*}
and the strong form with $U = H^2(\Omega) \cap H_0^1(\Omega)$ and
  \begin{align*}
     -\innerL{\Delta u}{ w} +\innerL{f}{w} &= 0\quad \foralls w\in W = L^2(\Omega).
  \end{align*}
In each of these different formulations of the optimization problem only two bilinear forms are involved: the $L^2$-inner product (twice in the objective functional and once in the state equation as the second term) and the bilinear form representing the negative Laplacian. In operator notation we use $M \colon L^2(\Omega) \rightarrow (L^2(\Omega))'$ for representing the $L^2$-inner product and $K \colon U \rightarrow U'$ for representing the bilinear form associated to the negative Laplacian:
\[
  \langle M y,z \rangle = \inner{y}{z}_{L^2(\Omega)}, \quad
  \langle K u,w \rangle 
    = \begin{cases}
       \innerL{\nabla u}{\nabla v}, \\
       -\innerL{u}{\Delta v}, \\
       -\innerL{\Delta u}{v},
      \end{cases}
\]
depending on the choice of $U$.
With these notations the state equation reads:
\[
  \dualprod{K u}{w} + \dualprod{M f}{w} = 0 \quad \foralls w\in 
   W = \begin{cases}
         H_0^1(\Omega), \\
         H^2(\Omega) \cap H_0^1(\Omega), \\
         L^2(\Omega).
       \end{cases}
\]

For discretizing the problem we use the optimize--then--discretize approach. Therefore we start by introducing
the associated Lagrangian functional, which reads
\begin{equation*}
\mathcal{L}\left(u,f,w\right) = \frac12 \normTwo{u-d}_{\lTwo} + \frac{\alpha}{2}\normTwo{f}_{\lTwo} +\dualprod{Ku}{w} +\dualprod{Mf}{w},
\end{equation*}
with $u\in U$, $f\in F$ and the Lagrangian multiplier $w\in W$.

From the first order necessary optimality conditions
\begin{equation*}
\pder{\mathcal{L}}{u}\left(u,f,w\right)=0, \quad 
\pder{\mathcal{L}}{f}\left(u,f,w\right)=0, \quad 
\pder{\mathcal{L}}{w}\left(u,f,w\right)=0,
\end{equation*}
which are also sufficient here, we obtain the optimality system, which leads to the following problem:

\begin{problem}
\label{prob:simpleAOCKKTstadard}
Find $\left(u,f,w\right)\in U\times F \times W$ such that 
\begin{equation*}
  \mathcal{A}_\alpha
  \begin{pmatrix} 
    u \\
    f \\
    w   
  \end{pmatrix} = 
  \begin{pmatrix} 
    {M} d\\
    0 \\
    0    
  \end{pmatrix}
  \quad \text{with} \quad
  \mathcal{A}_\alpha
   = \begin{pmatrix} 
    M &    0     & K' \\
    0 & \alpha M & M \\
    K &    M    & 0 
  \end{pmatrix} .
\end{equation*}
\end{problem}
Strictly speaking, the four operators $M$ appearing in $\mathcal{A}_\alpha$ are restrictions of the original operator $M$ introduced above on the corresponding spaces  $U$, $F$, and $W$.

The block operator in Problem \ref{prob:simpleAOCKKTstadard} is of the form (\ref{eq:ADefNN}) for $n = 2$ with
\begin{equation} 
\label{A12B1}
A_1 = 
\begin{pmatrix} 
    M  & 0  \\
    0  & \alpha M   
\end{pmatrix},\quad A_2 = 0
\quad\text{and}\quad B_1 = \begin{pmatrix} 
    K  & M  
\end{pmatrix}.
\end{equation}
We now analyze the three possible choices of $U$, which were considered above: 
\begin{enumerate}
\item
We start with $U =H_0^1(\Omega)$, where $\dualprod{Ku}{w} = \innerL{\nabla u}{\nabla w}$ and $W=H_0^1(\Omega)$. In this case it is obvious that $A_1$ is not positive definite on $X_1 = U \times F = H_0^1(\Omega) \times L^2(\Omega)$. So the results of Section \ref{sec:Theory} do not apply.
However, there exist other preconditioners which are $\alpha$-robust for this choice of $U =H_0^1(\Omega)$, see \cite{schoberl2007symmetric}.
\item
Next we examine $U = L^2(\Omega)$, where $\dualprod{Ku}{v}= -\innerL{u}{\Delta v}$ and $W = H^2(\Omega) \cap H_0^1(\Omega)$. For this choice, it is easy to see that  $S_1$ is positive definite, $S_2$ is well-defined with
\begin{equation} 
\label{case2}
S_1 = \begin{pmatrix} 
    M  & 0  \\
    0  & \alpha M   
\end{pmatrix}\quad \text{and} \quad
S_2 =  \frac{1}{\alpha} \, M+K M^{-1}K'.
\end{equation}
In order to apply the results of Section \ref{sec:Theory} we are left with showing that $S_2$ is positive definite. 
First observe that we have the following alternative representation of $S_2$:
\begin{lemma}
\[
  S_2 = \frac{1}{\alpha} \, M + B,
\]
where the (biharmonic) operator $B$ is given by 
\begin{equation} \label{biharm}
  \dualprod{B y}{z} = \innerL{\Delta y}{\Delta z}, \quad \foralls y, z \in H^2(\Omega) \cap H_0^1(\Omega).
\end{equation}
\end{lemma}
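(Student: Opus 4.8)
The plan is to verify directly that the operator product $K M^{-1} K'$ coincides with the biharmonic operator $B$; the asserted representation $S_2 = \frac{1}{\alpha}M + B$ then follows at once by substituting into the formula for $S_2$ in (\ref{case2}). The whole argument is a careful bookkeeping of the Banach-space adjoint $K'$ together with the Riesz identification encoded in $M$, and the only point requiring attention is that each composition maps between the correct spaces.

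First I would make the adjoint $K' \colon W \to U'$ explicit. Since $U = \lTwo$ and $\dualprod{Ku}{w} = -\innerL{u}{\Delta w}$ for $u \in \lTwo$ and $w \in W = H^2(\Omega)\cap H_0^1(\Omega)$, the defining relation of the Banach adjoint gives $\dualprod{K'w}{u} = \dualprod{Ku}{w} = -\innerL{\Delta w}{u}$ for all $u \in \lTwo$. Because $w \in H^2(\Omega)$ we have $\Delta w \in \lTwo$, so $K'w$ is precisely the functional on $\lTwo$ represented by $-\Delta w$.

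Next I would compute $M^{-1}K'$. As $M \colon \lTwo \to (\lTwo)'$ is exactly the Riesz isomorphism associated with the $L^2$-inner product, $M^{-1}$ sends a functional to its Riesz representative. Hence $M^{-1}K'w$ is the element $g \in \lTwo$ with $\innerL{g}{u} = \dualprod{K'w}{u} = -\innerL{\Delta w}{u}$ for all $u$, that is, $M^{-1}K'w = -\Delta w \in \lTwo = U$. It is here that the choice $W = H^2(\Omega)\cap H_0^1(\Omega)$ is essential, since it guarantees that $-\Delta w$ really lies in the domain $U$ of $K$, so that the middle composition is well defined.

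Finally I would apply $K$: for any $z \in W$,
\[
  \dualprod{K M^{-1} K' w}{z} = \dualprod{K(-\Delta w)}{z} = -\innerL{-\Delta w}{\Delta z} = \innerL{\Delta w}{\Delta z},
\]
which is exactly $\dualprod{B w}{z}$ by the definition (\ref{biharm}) of $B$. Since $z \in W$ was arbitrary, $K M^{-1} K' = B$, and substituting this into (\ref{case2}) gives $S_2 = \frac{1}{\alpha}M + B$. I do not expect any genuine obstacle here: the argument is entirely a matter of tracking duality pairings and Riesz maps, the only thing to keep straight being the regularity $\Delta w \in \lTwo$ that legitimizes the intermediate step.
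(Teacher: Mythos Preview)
Your argument is correct and reaches the same conclusion $K M^{-1} K' = B$ as the paper, but via a slightly different route. The paper computes only the quadratic form, using the variational identity
\[
  \dualprod{K M^{-1} K' w}{w} = \sup_{0 \neq v \in U}\frac{\dualprod{K' w}{v}^{2}}{\dualprod{M v}{v}} = \norm{\Delta w}_{\lTwo}^2,
\]
and then invokes self-adjointness of both $K M^{-1} K'$ and $B$ (hence polarization) to pass from equality of quadratic forms to equality of operators. You instead track the operators explicitly: identify $K'w$ with the functional represented by $-\Delta w$, apply the Riesz map $M^{-1}$ to get $-\Delta w \in U$, and then apply $K$ to obtain the full bilinear form $\innerL{\Delta w}{\Delta z}$ directly. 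Your approach is marginally more elementary in that it avoids the sup characterization and the polarization step; the paper's approach has the advantage of being a template that reappears verbatim in the discrete setting (Lemma~\ref{lemma:spaceCompLO}), where the supremum over the finite-dimensional space $W_h$ is exactly what is needed to detect whether $\Delta u_h$ lies in $W_h$.
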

\begin{proof}
For $w \in W = H^2(\Omega) \cap H_0^1(\Omega)$, We have 
\begin{align*}
\dualprod{K M^{-1} K' w}{w}
  & = \sup_{0 \neq v \in U}\frac{\dualprod{K' w}{v}^{2}}{\dualprod{M v}{v}} 
    = \sup_{0 \neq v \in U}\frac{\inner{v}{-\Delta w}^{2}_{\lTwoB}}{\inner{v}{v}_{L^2\left(\Omega\right)}}
    = \norm{\Delta w}_{\lTwo}^2,
\end{align*}
from which it follows that $K M^{-1} K' = B$, since both operators are self-adjoint. 
\end{proof}

The second ingredient for showing the positive definiteness of  $S_2$ is the following result, see \cite{grisvard2011elliptic} for a proof:

\begin{theorem} $\,$
\label{theo:convexGrisvard}
Assume that $\Omega$ is a bounded open subset in $\mathbb{R}^2$ ($\mathbb{R}^3$) with a polygonal (polyhedral) boundary $\partial \Omega$. Then 
\begin{equation*}
\norm{v}_{H^2\left(\Omega\right)} \leq C_r \norm{\Delta v}_{\lTwo}, \quad  \foralls v \in H^2\left(\Omega\right)\cap H^1_0\left(\Omega\right),
\end{equation*}
for some constant $C_r$.
\end{theorem}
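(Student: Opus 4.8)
The plan is to reduce the full $H^2$ bound to a single estimate on the pure second-order part, namely $\norm{D^2 v}_{\lTwo} \le \norm{\Delta v}_{\lTwo}$, and then to recover the lower-order terms $\norm{v}_{\lTwo}$ and $\norm{\nabla v}_{\lTwo}$ by Poincaré-type arguments. The mechanism behind the second-order estimate is a Green-type identity expressing $\norm{\Delta v}_{\lTwo}^2$ as the sum of $\norm{D^2 v}_{\lTwo}^2$ and a boundary integral controlled by the curvature of $\partial\Omega$. Convexity of $\Omega$ is precisely the hypothesis that forces this boundary contribution to carry the favourable sign; the theorem's label (convexGrisvard) and the reference to \cite{grisvard2011elliptic} indicate that convexity of the polytope is the operative assumption, and indeed the estimate is false on domains with reentrant corners.

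First I would establish the identity on a smooth convex domain. For $v \in C^\infty(\overline\Omega)$ vanishing on $\partial\Omega$, integrating $(\Delta v)^2 = \sum_{i,j}\partial_{ii}v\,\partial_{jj}v$ by parts twice and using that all tangential derivatives of $v$ vanish on the boundary (so that $\nabla v = (\dn{v})\,\mathbf{n}$ there) yields
\begin{equation*}
  \norm{\Delta v}_{\lTwo}^2 = \norm{D^2 v}_{\lTwo}^2 + \int_{\partial\Omega} \kappa \, (\dn{v})^2 \, ds,
\end{equation*}
where $\kappa \ge 0$ denotes the mean curvature of the boundary of a convex domain. Discarding the nonnegative boundary term gives $\norm{D^2 v}_{\lTwo} \le \norm{\Delta v}_{\lTwo}$, and a density argument extends this from smooth functions to all of $H^2(\Omega) \cap H^1_0(\Omega)$.

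For the polygonal/polyhedral case the smooth identity is not directly available, and this is the main obstacle. Along the flat faces the curvature vanishes, so the only possible trouble comes from the corners (and, in $\mathbb{R}^3$, the edges). I would dispose of these in one of two ways. The soft route is to exhaust the convex polytope by smooth convex domains and to pass to the limit, exploiting that the constant $1$ in $\norm{D^2 v}_{\lTwo} \le \norm{\Delta v}_{\lTwo}$ is curvature-independent. The route actually taken in \cite{grisvard2011elliptic} is a direct analysis of the corner singularities: near a corner of opening angle $\omega$ the leading singular solution of the Dirichlet Laplacian behaves like $r^{\pi/\omega}\sin(\pi\theta/\omega)$, which belongs to $H^2$ exactly when $\omega \le \pi$. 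Convexity guarantees $\omega \le \pi$ at every corner, so no $H^2$-violating singularity is excited and the a priori bound persists; for a reentrant corner with $\omega > \pi$ it genuinely fails, which is why convexity cannot be dropped.

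It remains to recover the lower-order terms. Since $v \in H^1_0(\Omega)$, the Poincaré inequality gives $\norm{v}_{\lTwo} \le C\,\norm{\nabla v}_{\lTwo}$, while integrating by parts once yields $\norm{\nabla v}_{\lTwo}^2 = -\innerL{\Delta v}{v} \le \norm{\Delta v}_{\lTwo}\,\norm{v}_{\lTwo}$ and hence $\norm{\nabla v}_{\lTwo} \le C\,\norm{\Delta v}_{\lTwo}$. Combining the gradient bound, the Poincaré bound, and the second-derivative bound gives $\norm{v}_{H^2(\Omega)} \le C_r\,\norm{\Delta v}_{\lTwo}$, which is the claim. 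I expect the corner/edge analysis to be the only genuinely delicate ingredient; the remaining steps are routine integration by parts and Poincaré estimates.
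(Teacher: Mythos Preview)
The paper does not prove this theorem; it simply cites Grisvard's monograph. What can be compared is scope, and here your proposal has a genuine gap: you have read convexity into the hypotheses (guided by the internal label), and your entire argument --- the Kadlec-type identity with the curvature boundary term, the smooth-convex exhaustion, the $\omega\le\pi$ corner analysis --- relies on it. You then assert that ``the estimate is false on domains with reentrant corners'' and that ``convexity cannot be dropped''. But the theorem as stated assumes only a polygonal/polyhedral boundary, \emph{not} convexity, and the paper explicitly stresses that dispensing with convexity is the point (see the remark after Corollary~\ref{cor:distrcntllimitobs} and the Conclusions).

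The confusion is between two different statements. What fails at a reentrant corner is \emph{surjectivity} of $\Delta\colon H^2(\Omega)\cap H^1_0(\Omega)\to L^2(\Omega)$: the singular function $r^{\pi/\omega}\sin(\pi\theta/\omega)$ lies outside $H^2$ when $\omega>\pi$, so some $L^2$ right-hand sides have no $H^2$ solution. The a~priori estimate in the theorem, however, concerns only functions already in $H^2\cap H^1_0$, and this survives on any polygon. Grisvard shows that $\Delta$ on $H^2\cap H^1_0$ is injective with closed range (Fredholm of nonpositive index); the open mapping theorem then gives a bounded inverse on the range, which is exactly $\|v\|_{H^2(\Omega)}\le C_r\|\Delta v\|_{L^2(\Omega)}$. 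Equivalently, in Grisvard's decomposition $u=u_R+\sum_j c_j S_j$ with $u_R\in H^2$ and singular parts $S_j\notin H^2$, membership of $u$ in $H^2$ forces all $c_j=0$, whence $\|u\|_{H^2}=\|u_R\|_{H^2}\le C\|\Delta u\|_{L^2}$. Your curvature argument establishes only the convex subcase (with the sharp constant $1$ on the Hessian); for the theorem as written you need the closed-range/Fredholm route instead.
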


From this a priori estimate the required property of $S_2$ follows:

\begin{lemma}
\label{lemma:S3SPD}
Under the assumptions of Theorem \ref{theo:convexGrisvard}, the operator $ S_2: W \rightarrow W'$, given by (\ref{case2}) is bounded, self-adjoint and positive definite, where 
$W = H^2\left(\Omega\right)\cap H^1_0\left(\Omega\right)$.
\end{lemma}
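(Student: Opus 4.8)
The plan is to build everything on the alternative representation established in the preceding lemma, namely $S_2 = \frac{1}{\alpha}\,M + B$, so that for all $y,z \in W = H^2(\Omega)\cap H^1_0(\Omega)$ one has
\[
  \dualprod{S_2 y}{z} = \frac{1}{\alpha}\innerL{y}{z} + \innerL{\Delta y}{\Delta z}.
\]
Once $S_2$ is written as this explicitly symmetric bilinear form on $W$, the three asserted properties separate cleanly, and the whole substance of the lemma is concentrated in the coercivity estimate, for which the a priori bound of Theorem \ref{theo:convexGrisvard} is exactly the tool needed.

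Self-adjointness I would simply read off: the right-hand side above is manifestly symmetric under interchanging $y$ and $z$, so $\dualprod{S_2 y}{z} = \dualprod{S_2 z}{y}$. For boundedness I would estimate the two terms separately by the Cauchy--Schwarz inequality, using the elementary bounds $\norm{y}_{\lTwo} \le \norm{y}_{H^2(\Omega)}$ and $\norm{\Delta y}_{\lTwo} \le \norm{y}_{H^2(\Omega)}$, to obtain $|\dualprod{S_2 y}{z}| \le (\tfrac{1}{\alpha}+1)\,\norm{y}_{H^2(\Omega)}\norm{z}_{H^2(\Omega)}$; hence $S_2 \in \mathcal{L}(W,W')$, with the $H^2(\Omega)$-norm taken as the norm on $W$.

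The key step is positive definiteness. Setting $z = y$ and discarding the nonnegative mass contribution gives $\dualprod{S_2 y}{y} \ge \normTwo{\Delta y}_{\lTwo}$. Here I would invoke Theorem \ref{theo:convexGrisvard}: since $\norm{y}_{H^2(\Omega)} \le C_r\,\norm{\Delta y}_{\lTwo}$ for every $y \in W$, squaring and rearranging yields $\normTwo{\Delta y}_{\lTwo} \ge C_r^{-2}\,\normTwo{y}_{H^2(\Omega)}$, and therefore $\dualprod{S_2 y}{y} \ge C_r^{-2}\,\normTwo{y}_{H^2(\Omega)}$, which is the required positive definiteness with $\sigma = C_r^{-2}$. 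The only genuine obstacle is precisely this coercivity, and it is entirely absorbed into the regularity estimate of Theorem \ref{theo:convexGrisvard}: the biharmonic term $\normTwo{\Delta y}_{\lTwo}$ by itself controls only a seminorm built from $\Delta$, and it is exactly the polygonal/polyhedral domain hypothesis that upgrades this to control of the full $H^2(\Omega)$-norm. Everything else is routine.
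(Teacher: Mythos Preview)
Your proof is correct and follows essentially the same approach as the paper: both use the representation $S_2=\tfrac1\alpha M+B$, dismiss boundedness and self-adjointness as immediate, and obtain positive definiteness by dropping the mass term (i.e.\ $S_2\ge B$) and invoking Theorem~\ref{theo:convexGrisvard} to make $\normTwo{\Delta y}_{\lTwo}$ coercive on $W$. The paper's version is simply terser, stating only that $B$ is positive definite by Theorem~\ref{theo:convexGrisvard} and that $S_2\ge B$, whereas you spell out the constants explicitly.
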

\begin{proof}\unskip
It is obvious that $S_2$ is bounded and self-adjoint. Moreover,
it follows from Theorem \ref{theo:convexGrisvard} that $B$ is positive definite. Since $S_2 \ge B$, $S_2$ is positive definite, too.
\end{proof}
As a direct consequence from Corollary \ref{corollary:condNumberNN}, Lemma \ref{lemma:S3SPD}, and the results of Section \ref{sec:Theory} we have

\begin{corollary}
\label{corollary:condNumberOCObservation}
Under the assumptions of Theorem \ref{theo:convexGrisvard}, the operator $\mathcal{A}_\alpha$ in Problem \ref{prob:simpleAOCKKTstadard} is an isomorphism from $\mathbf{X} = \lTwo\times \lTwo \times \left(H^2\left(\Omega\right)\cap H^1_0\left(\Omega\right)\right)$ to its dual space with respect to the norm in $\mathbf{X}$ given by
\begin{equation*}
\normTwo{\left(u,f,w\right)} = \normTwo{u}_U + \normTwo{f}_F + \normTwo{w}_W
\end{equation*}
with
\begin{equation*}
\normTwo{u}_{U} =  \normTwo{u}_{\lTwo},\quad 
\normTwo{f}_{F} =  \alpha \normTwo{f}_{\lTwo},\quad 
\normTwo{w}_{W} =  \frac{1}{\alpha}\normTwo{w}_{\lTwo} + \normTwo{\Delta w}_{\lTwo}. 
\end{equation*}
Furthermore, the following condition number estimate holds:
\begin{equation*}
\kappa\left(\mathcal{S}^{-1}_\alpha\mathcal{A}_\alpha\right) \leq \frac{\cos\left(\frac{\pi}{5}\right)}{\sin\left(\frac{\pi}{10}\right)}\approx 2.62
\quad \text{with} \quad 
\mathcal{S}_\alpha 
  = \begin{pmatrix} 
      M & & \\
      & \alpha \, M & \\
      & & \frac{1}{\alpha} M + B  
    \end{pmatrix}.
\end{equation*}
\end{corollary}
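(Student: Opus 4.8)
The plan is to recognize Problem \ref{prob:simpleAOCKKTstadard} as an instance of the abstract block tridiagonal operator (\ref{eq:ADefNN}) with $n=2$ and then simply invoke Corollary \ref{corollary:condNumberNN}. Concretely, with $X_1 = \lTwo \times \lTwo$ carrying the pair $(u,f)$ and $X_2 = H^2(\Omega)\cap H_0^1(\Omega)$ carrying $w$, the identification (\ref{A12B1}) gives $A_1 = \diag(M,\alpha M)$, $A_2 = 0$, and $B_1 = \begin{pmatrix} K & M\end{pmatrix}$. The whole corollary then reduces to checking the hypotheses of Lemma \ref{lem:one} for this data and to reading off the induced $\mathcal{S}$-norm.

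First I would verify the structural hypotheses. Since $M$ is self-adjoint and positive definite and $\alpha > 0$, the operator $A_1$ is bounded, self-adjoint and positive definite, hence in particular positive semi-definite as required by (\ref{spsd}) for the odd index $i=1$; $A_2 = 0$ trivially satisfies (\ref{spsd}); and $B_1$ is bounded. Moreover $S_1 = A_1$ is positive definite, so $S_1$ is invertible and the Schur complement $S_2$ is well-defined. Evaluating (\ref{eq:SchurCompCom}) with this data yields $S_2 = \frac{1}{\alpha} M + K M^{-1} K'$, in agreement with (\ref{case2}).

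The one genuinely nontrivial ingredient, positive definiteness of $S_2$, is precisely Lemma \ref{lemma:S3SPD}: using the identity $K M^{-1} K' = B$ together with (\ref{biharm}) one has $S_2 = \frac{1}{\alpha}M + B$ with $B \ge 0$ the biharmonic operator, whose coercivity on $H^2(\Omega)\cap H_0^1(\Omega)$ follows from the a priori estimate of Theorem \ref{theo:convexGrisvard}. With all hypotheses of Lemma \ref{lem:one} thereby verified, Corollary \ref{corollary:condNumberNN} applies with $n=2$ and gives immediately $\kappa(\mathcal{A}_\alpha) = \kappa(\mathcal{S}_\alpha^{-1}\mathcal{A}_\alpha) \le \cos(\pi/5)/\sin(\pi/10)$.

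It remains to match the abstract $\mathcal{S}$-norm with the explicit norm in the statement, and this is the only place where care is needed. Because the first factor $X_1$ is itself a product, the block-diagonal preconditioner $\mathcal{S} = \diag(S_1, S_2)$ has three scalar blocks, namely $\mathcal{S}_\alpha = \diag\!\big(M,\ \alpha M,\ \tfrac{1}{\alpha}M + B\big)$. Expanding $\norm{(u,f,w)}_{\mathcal{S}_\alpha}^2 = \dualprod{M u}{u} + \alpha\dualprod{M f}{f} + \dualprod{(\tfrac{1}{\alpha}M + B)w}{w}$ and using $\dualprod{M v}{v} = \norm{v}_{\lTwo}^2$ together with $\dualprod{B w}{w} = \norm{\Delta w}_{\lTwo}^2$ reproduces exactly the stated norm, which completes the proof. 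Since the hard analytic work is already carried by Lemma \ref{lemma:S3SPD} via Grisvard's estimate, the corollary itself is essentially bookkeeping, and the main (mild) obstacle is this correct splitting of $X_1$ when writing the norm out in closed form.
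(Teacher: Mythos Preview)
Your proposal is correct and follows exactly the approach of the paper, which states the corollary as a direct consequence of Corollary~\ref{corollary:condNumberNN}, Lemma~\ref{lemma:S3SPD}, and the results of Section~\ref{sec:Theory}. If anything, you have spelled out the bookkeeping (the identification $X_1 = \lTwo\times\lTwo$, the verification of the hypotheses of Lemma~\ref{lem:one}, and the explicit unpacking of the $\mathcal{S}$-norm) in more detail than the paper itself does.
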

\item
Finally we examine the last option $U = H^2(\Omega) \cap H_0^1(\Omega)$, where $\dualprod{Ku}{v}= -\innerL{\Delta u}{ v}$ and $W = L^2(\Omega)$. With the original setting (\ref{A12B1}) we cannot apply the results of Section \ref{sec:Theory}, since $A_1$ is not positive definite.
To overcome this we change the ordering of the variables and equations and obtain the following equivalent form of the optimality conditions:
\begin{equation}
\label{eq:simpleAOCstandard} 
  \widetilde{\mathcal{A}}_\alpha
  \begin{pmatrix} 
    f \\
    w \\
    u   
  \end{pmatrix} = 
  \begin{pmatrix} 
    0\\
    0 \\
    {M} d    
  \end{pmatrix}
  \quad \text{with} \quad
  \widetilde{\mathcal{A}}_\alpha
   = \begin{pmatrix} 
    \alpha M &    M     & 0 \\
    M & 0 & K \\
    0 &    K'    & M 
  \end{pmatrix} .
\end{equation}
Here we view $\widetilde{\mathcal{A}}_\alpha$ as a block operator of the form (\ref{eq:ADefNN}) for $n = 3$ with 
\begin{equation*}
A_1 = \alpha M, \quad A_2 = 0, \quad A_3 = M, \quad B_1 =M, \quad \text{and} \quad B_2=K'.
\end{equation*}
The corresponding Schur complement components are given by
\begin{equation*}
S_1 = \alpha M, \quad S_2=\frac{1}{\alpha}M, \quad \text{and} \quad
S_3 = M+\alpha K' M^{-1}K.
\end{equation*}
As before we have the following alternative representation of $S_3$:
\[
  S_3 = M+\alpha \, B,
\]
with the biharmonic operator, given by (\ref{biharm}).
It is obvious that $S_1$ and $S_2$ are positive definite. We are left with showing that $S_3$ is positive definite, which follows from Lemma \ref{lemma:S3SPD}, since $K$ and $S_3$ in this cases are identical to $K'$ and $\alpha \, S_2$ from the previous case. So, finally we obtain the following result analogously to Corollary \ref{corollary:condNumberOCObservation}:

\begin{corollary}
Under the assumptions of Theorem \ref{theo:convexGrisvard}, the operator $\widetilde{\mathcal{A}}_\alpha$ in Equation (\ref{eq:simpleAOCstandard}) is an isomorphism from $\mathbf{X} = \lTwo\times \lTwo \times \left(H^2\left(\Omega\right)\cap H^1_0\left(\Omega\right)\right)$ to its dual space with respect to the norm in $\mathbf{X}$ given by
\begin{equation*}
\normTwo{\left(f,w,u\right)} = \normTwo{f}_F + \normTwo{w}_W + \normTwo{u}_U
\end{equation*}
with
\begin{equation*}
\normTwo{f}_{F} = \alpha \normTwo{f}_{\lTwo},\quad
\normTwo{w}_{W} = \frac1\alpha \normTwo{w}_{\lTwo},\quad
\normTwo{u}_{U} = \alpha \normTwo{u}_{\lTwo}+\normTwo{\Delta u}_{\lTwo}  .
\end{equation*}
Furthermore, the following condition number estimate holds:
\begin{equation*}
\kappa\left(\mathcal{S}^{-1}_\alpha \widetilde{\mathcal{A}}_\alpha\right) \leq \frac{\cos\left({\pi}/{7}\right)}{\sin\left({\pi}/{14}\right)}\approx4.05
\quad \text{with} \quad 
\mathcal{S}_\alpha 
  = \begin{pmatrix} 
      \alpha M & & \\
      & \frac{1}{\alpha} \, M & \\
      & & M+\alpha \, B 
    \end{pmatrix}.
\end{equation*}
\end{corollary}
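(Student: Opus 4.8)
The plan is to verify that $\widetilde{\mathcal{A}}_\alpha$ fits the abstract framework of Section \ref{sec:Theory} with $n=3$ and then invoke Corollary \ref{corollary:condNumberNN}. As recorded in the text preceding the statement, $\widetilde{\mathcal{A}}_\alpha$ is already written in the block tridiagonal form (\ref{eq:ADefNN}) with $A_1 = \alpha M$, $A_2 = 0$, $A_3 = M$, $B_1 = M$, and $B_2 = K'$, and the associated Schur complements are $S_1 = \alpha M$, $S_2 = \frac{1}{\alpha} M$, and $S_3 = M + \alpha K' M^{-1} K = M + \alpha B$. So all the structural identities are at hand and the proof is essentially bookkeeping.

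First I would check the hypotheses of Lemma \ref{lem:one}. The operators $A_1 = \alpha M$ and $A_3 = M$ are bounded, self-adjoint, and positive semi-definite on $\lTwo$, while $A_2 = 0$ trivially satisfies (\ref{spsd}); the operators $B_1, B_2$ are bounded. It then remains to verify that $S_1, S_2, S_3$ are well-defined and positive definite. For $S_1 = \alpha M$ and $S_2 = \frac{1}{\alpha} M$ this is immediate from the positive definiteness of the mass operator $M$ on $\lTwo$. For $S_3 = M + \alpha B$ the positive definiteness is exactly Lemma \ref{lemma:S3SPD}: as noted in the text, in this ordering $K$ plays the role of $K'$ from the previous case, so that $S_3 = \alpha S_2$ with the $S_2$ of that case, which was shown positive definite there using the $H^2$-regularity estimate of Theorem \ref{theo:convexGrisvard}.

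With all three Schur complements positive definite, Lemma \ref{lem:one} gives that $\widetilde{\mathcal{A}}_\alpha$ is an isomorphism, and Corollary \ref{corollary:condNumberNN} with $n=3$ (so that $2n+1 = 7$) yields the stated condition number bound $\kappa(\mathcal{S}_\alpha^{-1} \widetilde{\mathcal{A}}_\alpha) \le \cos(\pi/7)/\sin(\pi/14)$. Finally I would identify the norm $\norm{\cdot}_{\mathcal{S}}$ induced by the Schur complement preconditioner $\mathcal{S}_\alpha = \diag(S_1, S_2, S_3)$: the three component norms are $\normTwo{f}_F = \dualprod{\alpha M f}{f} = \alpha \normTwo{f}_{\lTwo}$, $\normTwo{w}_W = \frac{1}{\alpha} \normTwo{w}_{\lTwo}$, and $\normTwo{u}_U = \dualprod{(M + \alpha B) u}{u} = \normTwo{u}_{\lTwo} + \alpha \normTwo{\Delta u}_{\lTwo}$, which is exactly the norm in the statement.

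The only place where genuine analysis enters is the positive definiteness of $S_3$, and this is entirely inherited from the elliptic regularity result via Lemma \ref{lemma:S3SPD}; everything else is a direct verification of the abstract hypotheses. Consequently I expect no real obstacle — the argument is a straightforward specialization of the general theory that mirrors the proof of Corollary \ref{corollary:condNumberOCObservation}.
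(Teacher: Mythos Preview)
Your approach is correct and matches the paper's: the corollary is stated without a separate proof, as the preceding discussion already identifies $A_1,A_2,A_3,B_1,B_2$ and the Schur complements $S_1,S_2,S_3$, reduces the positive definiteness of $S_3 = M+\alpha B$ to Lemma~\ref{lemma:S3SPD}, and then invokes Corollary~\ref{corollary:condNumberNN} with $n=3$.

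One small slip: your computed norm $\normTwo{u}_U = \dualprod{(M+\alpha B)u}{u} = \normTwo{u}_{\lTwo} + \alpha\normTwo{\Delta u}_{\lTwo}$ is the correct quadratic form induced by $S_3 = M+\alpha B$, but it is \emph{not} literally the norm written in the statement, which has the $\alpha$ on the $\lTwo$-term instead. This is a typo in the stated corollary (the preconditioner block $M+\alpha B$ and the displayed $\mathcal{S}_\alpha$ are consistent with your version, not with the stated $\normTwo{u}_U$), so your derivation is right and your final sentence ``which is exactly the norm in the statement'' should simply be adjusted.
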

\end{enumerate}

The characteristic properties of the model problem of this subsection are:
\begin{itemize}
\item distributed observation: This refers to the first term in the objective functional, where the state $u$ is compared to the given data on the whole domain $\Omega$, and
\item distributed control: The state $u$ is controlled by $f$, which is allowed to live on the whole domain $\Omega$. 
\end{itemize}

Alternatively, the comparison with given data might be done on a set $\Omega_o$ different from $\Omega$, which is called limited observation. 
Similarly, the control might live on a set $\Omega_c$ different from $\Omega$, which is called limited control. 
In the next two subsections, we will see that the results based on the very weak form of the state equation and on the strong form of the state equation of the state equation  can be extended to problems with distributed observation and limited control and to problems with distributed control and limited observation, respectively. For simplicity we will focus on model problems with $\Omega_o = \partial \Omega$ or $\Omega_c = \partial \Omega$.

\subsection{Distributed observation and limited control}
\label{subsec:limitedCntl}

We consider the following variation of the model problem from Subsection \ref{subsec:distrObsdistrCntl}
as a model problem for distributed observation and limited control:

Find $u \in U$ and $f \in F = L^2(\partial\Omega)$ which minimizes the objective functional
  \begin{align*}
    \frac12 \normTwo{u-d}_{\lTwo} + \frac{\alpha}{2}\normTwo{f}_{\lTwoB}  
  \end{align*}
subject to the constraint
  \begin{align*}
    -\Delta u &= 0\quad \text{in} \quad \Omega,\\
     u &= f \quad \text{on} \quad \partial\Omega,
  \end{align*}
where $d\in U = \lTwo$ and $\alpha >0$ are given data.

This model problem and error estimates for a finite element discretization are analyzed in \cite{may2013error} for convex domains $\Omega$. As in 
\cite{may2013error} we consider the very weak form of the state equation:
   \begin{align*}
\innerL{ u}{-\Delta w} +\inner{f}{\partial_{n} w }_{\lTwoB} &= 0,\quad \foralls w\in W,
  \end{align*}
with $u \in U = L^2(\Omega)$ and $W = H^2\left(\Omega\right)\cap H^1_0\left(\Omega\right)$. Here $\partial_n w$ denotes the normal derivative of $w$ on $\partial \Omega$.
Contrary to  \cite{may2013error} we do not assume that $\Omega$ is convex. 
See also \cite{berggren2004approximations} for another version of a very weak formulation which coincides with the formulation from above for convex domains.

Analogous to Subsection \ref{subsec:distrObsdistrCntl} the optimality system can be derived and reads:
\begin{problem}
\label{prob:simpleAOCKKTLC}
Find $\left(f,w,u\right)\in F\times W \times U$  such that 
\begin{equation}
  \mathcal{A}_\alpha
  \begin{pmatrix} 
    u \\
    f \\
    w   
  \end{pmatrix} = 
  \begin{pmatrix} 
    M d \\
    0 \\
    0
  \end{pmatrix} \quad \text{with}\quad
  \mathcal{A}_\alpha=  \begin{pmatrix} 
    M &    0  & K' \\
    0 & \alpha M_{\partial}    & N \\
    K        & N'    &  0
  \end{pmatrix},
\end{equation}
where
\begin{align*}
  \dualprod{M y}{z} &= \inner{y}{z}_{\lTwo}, &
  \dualprod{K u}{w} &= -\inner{u}{\Delta w}_{\lTwo}, \\
  \dualprod{M_\partial f}{g} &= \inner{f}{g}_{\lTwoB}, &
  \dualprod{N w}{f} &= \inner{\partial_n w}{f}_{\lTwoB},
\end{align*}
and
$U = L^2\left(\Omega\right)$, $F = \lTwoB$, and $W=H^2\left(\Omega\right)\cap H^1_0\left(\Omega\right)$.
\end{problem}

Using similar arguments as for Problem \ref{prob:simpleAOCKKTstadard} with the very weak formulation of the state equation, we obtain the following result: 

\begin{corollary}
\label{corollary:condNumberOCControl}
Under the same assumptions of Theorem \ref{theo:convexGrisvard}, the operator $\mathcal{A}_\alpha$ in Problem \ref{prob:simpleAOCKKTLC} is an  isomorphism between $\mathbf{X}=\lTwo \times \lTwoB \times H^2\left(\Omega\right)\cap H^1_0\left(\Omega\right)$ and its dual space with respect to the norm in $\mathbf{X}$ given by 
\begin{align*}
\normTwo{\left(u,f,w\right)} = \normTwo{u}_U + \normTwo{f}_F + \normTwo{w}_W
\end{align*}
with
\begin{equation*}
\normTwo{u}_{U} = \normTwo{u}_{\lTwo},\quad
\normTwo{f}_{F} = \alpha \normTwo{f}_{\lTwoB},\quad
\normTwo{w}_{W} = \normTwo{\Delta w}_{\lTwo}+ \frac{1}{\alpha}\normTwo{\partial_{n}w}_{\lTwoB}  .
\end{equation*}
Furthermore, the following condition number estimate holds 
\begin{equation*}
\kappa\left(\mathcal{S}^{-1}_\alpha\mathcal{A}_\alpha\right) \leq \frac{\cos\left({\pi}/{5}\right)}{\sin\left({\pi}/{10}\right)} \approx 2.62
\quad \text{with} \quad 
{\mathcal{S}}_\alpha 
  = \begin{pmatrix} 
       M & & \\
      & \alpha \, M_\partial & \\
      & &  \frac{1}{\alpha}\, K_{\partial} + B
    \end{pmatrix}
\end{equation*}
where
\begin{align*}
  \dualprod{K_\partial y}{z} &= \inner{\partial_n y}{\partial_n z}_{\lTwoB}.
\end{align*}
\end{corollary}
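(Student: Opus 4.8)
The plan is to cast Problem \ref{prob:simpleAOCKKTLC} into the abstract block tridiagonal framework of Section \ref{sec:Theory} with $n=2$, exactly in the spirit of the very weak formulation treated as case 2 in Subsection \ref{subsec:distrObsdistrCntl}, and then to invoke Corollary \ref{corollary:condNumberNN}. First I would group the state $u$ and the control $f$ into a single first variable, so that $\mathcal{A}_\alpha$ takes the form (\ref{eq:ADefNN}) for $n=2$ with
\[
  A_1 = \begin{pmatrix} M & 0 \\ 0 & \alpha M_\partial\end{pmatrix}, \quad A_2 = 0, \quad B_1 = \begin{pmatrix} K & N'\end{pmatrix},
\]
acting on $X_1 = U \times F = \lTwo \times \lTwoB$ and $X_2 = W = H^2(\Omega)\cap H_0^1(\Omega)$. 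Since $M$ and $M_\partial$ are Riesz isomorphisms, $A_1$ is bounded, self-adjoint and positive definite, so $S_1 = A_1$ satisfies (\ref{spsd}) and is invertible, while $A_2 = 0$ satisfies (\ref{spsd}) trivially. This verifies all the hypotheses of Lemma \ref{lem:one} except the positive definiteness of the second Schur complement.

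Next I would compute $S_2$ from (\ref{eq:SchurCompCom}). Since $S_1^{-1} = \diag(M^{-1}, \alpha^{-1}M_\partial^{-1})$, a direct multiplication gives
\[
  S_2 = B_1 S_1^{-1} B_1' = K M^{-1} K' + \frac{1}{\alpha}\, N' M_\partial^{-1} N.
\]
The first term is the biharmonic operator $B$ of (\ref{biharm}), by the same supremum argument that established $K M^{-1} K' = B$ in Subsection \ref{subsec:distrObsdistrCntl}. For the second term I would repeat that argument: for $w \in W$ one has $\partial_n w \in L^2(\partial\Omega)$ by the trace theorem, and
\[
  \dualprod{N' M_\partial^{-1} N w}{w} = \sup_{0\neq f\in F}\frac{\dualprod{Nw}{f}^2}{\dualprod{M_\partial f}{f}} = \sup_{0\neq f \in \lTwoB}\frac{\inner{\partial_n w}{f}_{\lTwoB}^2}{\inner{f}{f}_{\lTwoB}} = \normTwo{\partial_n w}_{\lTwoB} = \dualprod{K_\partial w}{w},
\]
so that $N' M_\partial^{-1} N = K_\partial$ and hence $S_2 = \frac{1}{\alpha}K_\partial + B$, which is precisely the third diagonal block of $\mathcal{S}_\alpha$.

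It then remains to check that $S_2$ is bounded and positive definite on $W$. Boundedness follows from the continuity of $\Delta$ on $H^2(\Omega)$ and the trace theorem. For positive definiteness I would note that $K_\partial$ is positive semi-definite, whence $S_2 \ge B$, and that $B$ is positive definite by Lemma \ref{lemma:S3SPD}, which rests on the Grisvard a priori estimate of Theorem \ref{theo:convexGrisvard}. Since that estimate holds on any polygonal/polyhedral domain, no convexity is required, which is exactly where this argument improves on \cite{may2013error}. With $S_1$ and $S_2$ both positive definite, Lemma \ref{lem:one} gives that $\mathcal{A}_\alpha$ is an isomorphism, Corollary \ref{corollary:condNumberNN} with $n=2$ yields $\kappa(\mathcal{S}_\alpha^{-1}\mathcal{A}_\alpha)\le \cos(\pi/5)/\sin(\pi/10)$, and the inner product $(\cdot,\cdot)_{\mathcal{S}_\alpha}$ unwinds to exactly the stated norm on $\mathbf{X}$. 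The only genuinely delicate points are the identification $S_2 = \frac{1}{\alpha}K_\partial + B$ and the non-convex well-posedness of $B$; once these are in place the result is a direct instance of the abstract theory.
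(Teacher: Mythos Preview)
Your proposal is correct and follows exactly the route the paper indicates: it reduces the problem to the $n=2$ block tridiagonal framework in the same way as case~2 of Subsection~\ref{subsec:distrObsdistrCntl}, with the only new ingredient being the identification $N' M_\partial^{-1} N = K_\partial$, which you establish by the same supremum argument used there for $K M^{-1} K' = B$. The paper itself omits the details and simply states that the result follows ``using similar arguments as for Problem~\ref{prob:simpleAOCKKTstadard} with the very weak formulation of the state equation''; you have correctly supplied those arguments.
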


\subsection{Distributed control and limited observation} 
\label{subsec:limitedObs}

Finally we consider a model problem with distributed control and limited observation: 

Find $u \in U$ and $f \in F = L^2(\Omega)$ which minimizes the objective functional
  \begin{align*}
    \frac12 \normTwo{\partial_n u-d}_{\lTwoB} + \frac{\alpha}{2}\normTwo{f}_{\lTwo}  
  \end{align*}
subject to the constraint
  \begin{align*}
    -\Delta u + f &= 0\quad \text{in} \quad \Omega,\\
     u &= 0 \quad \text{on} \quad \partial\Omega,
  \end{align*}
where $d\in \lTwoB$, $\alpha > 0$ are given data.

Robust preconditioners for this problem were first analyzed in \cite{KAMmagneOC}. As in \cite{KAMmagneOC} the strong form of the state equation is used: $u \in U = H^2\left(\Omega\right)\cap H^1_0\left(\Omega\right)$ and
\begin{equation*}
  - \inner{\Delta u}{w}_{\lTwo} + \inner{f}{w}_{\lTwo} = 0 \quad \foralls w \in W = \lTwo.
\end{equation*}

Following the same procedure as for Problem \ref{prob:simpleAOCKKTstadard} with $U = H^2\left(\Omega\right)\cap H^1_0\left(\Omega\right)$
we obtain the (reordered) optimality system.

\begin{problem}
\label{prob:simpleAOCKKTLO}
Find $\left(f,w,u\right)\in W\times W \times U$ such that 
\begin{equation*}
\widetilde{\mathcal{A}}_\alpha
  \begin{pmatrix} 
    f \\
    w \\
    u   
  \end{pmatrix} = 
  \begin{pmatrix} 
    0 \\
    0 \\
    N' d   
  \end{pmatrix}\quad \text{with} \quad 
\widetilde{\mathcal{A}}_\alpha=
  \begin{pmatrix} 
    \alpha M &    M  & 0 \\
    M& 0    & K \\
    0        & K'    & K_\partial
  \end{pmatrix},
\end{equation*}
where
\begin{align*}
  \dualprod{K u}{w} &= -\inner{\Delta u}{ w}_{\lTwo}, &
  \dualprod{N' d}{v} = \dualprod{N v}{d} &= \inner{\partial_n v}{d}_{\lTwoB},
\end{align*}
and $W = F = L^2\left(\Omega\right)$, and $U=H^2\left(\Omega\right)\cap H^1_0\left(\Omega\right)$.
\end{problem}
Using similar arguments as for Problem \ref{prob:simpleAOCKKTstadard} with $U = H^2\left(\Omega\right)\cap H^1_0\left(\Omega\right)$, we obtain the following result:

\begin{corollary}
\label{cor:distrcntllimitobs}
Under the same  assumptions of Theorem \ref{theo:convexGrisvard}, the operator  $\widetilde{\mathcal{A}}_\alpha$ in Problem \ref{prob:simpleAOCKKTLO} is an isomorphism between $\mathbf{X}= \lTwo\times \lTwo \times H^2\left(\Omega\right)\cap H^1_0\left(\Omega\right)$ and its dual space, with respect to the norm in $\mathbf{X}$ given by 
\begin{align*}
\normTwo{\left(f,w,u\right)} = \normTwo{f}_F + \normTwo{w}_W + \normTwo{u}_U
\end{align*}
with
\begin{align*}
\normTwo{f}_{F} &= \alpha \normTwo{f}_{\lTwo}, \quad
\normTwo{w}_{W} &= \frac1\alpha \normTwo{w}_{\lTwo},\quad 
\normTwo{u}_{U} &= \normTwo{\partial_{n}u}_{\lTwoB} +\alpha \normTwo{\Delta u}_{\lTwo}.
\end{align*}
Furthermore, the following condition number estimate holds 
\begin{equation*}
\kappa\left(\mathcal{S}^{-1}_\alpha\widetilde{\mathcal{A}}_\alpha\right) \leq \frac{\cos\left({\pi}/{7}\right)}{\sin\left({\pi}/{14}\right)}\approx4.05
\quad \text{with} \quad 
\mathcal{S}_\alpha 
  = \begin{pmatrix} 
       \alpha \,M & & \\
      & \frac{1}{\alpha} \, M & \\
      & &    K_{\partial}+\alpha \, B
    \end{pmatrix}.
\end{equation*}
\end{corollary}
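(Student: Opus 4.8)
The plan is to check that $\widetilde{\mathcal{A}}_\alpha$ fits the abstract framework of Section \ref{sec:Theory} with $n = 3$ and then invoke Corollary \ref{corollary:condNumberNN}, exactly as was done for the strong-form case in Subsection \ref{subsec:distrObsdistrCntl}. Reading the block entries of $\widetilde{\mathcal{A}}_\alpha$ off against the template (\ref{eq:ADefNN}) (note that $(-1)^{n-1} = 1$ for $n = 3$), I would identify
\[
  A_1 = \alpha M, \quad A_2 = 0, \quad A_3 = K_\partial, \quad B_1 = M, \quad B_2 = K'.
\]
Here $A_1, A_2, A_3$ are self-adjoint and positive semi-definite, since $\dualprod{K_\partial u}{u} = \normTwo{\partial_n u}_{\lTwoB} \ge 0$, and $B_1, B_2$ are bounded; boundedness of the blocks with respect to the norm induced by $\mathcal{S}_\alpha$ is routine.

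Next I would compute the associated Schur complements from the recursion (\ref{eq:SchurCompCom}), obtaining
\[
  S_1 = \alpha M, \qquad
  S_2 = B_1 S_1^{-1} B_1' = \tfrac{1}{\alpha} M, \qquad
  S_3 = K_\partial + \alpha\, K' M^{-1} K.
\]
The key simplification is the identity $K' M^{-1} K = B$, the biharmonic operator of (\ref{biharm}). This is the same computation as in the lemma preceding Lemma \ref{lemma:S3SPD}, now with the roles of $K$ and $K'$ interchanged: a short duality argument shows $\dualprod{K' M^{-1} K u}{u} = \normTwo{\Delta u}_{\lTwo}$, since the Riesz representative $M^{-1} K u$ equals $-\Delta u$. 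Hence $S_3 = K_\partial + \alpha B$, which is precisely the $(3,3)$ block of the proposed preconditioner $\mathcal{S}_\alpha$. One then checks directly that the three diagonal blocks of $\mathcal{S}_\alpha$ induce exactly the stated norm, namely $\normTwo{f}_F = \dualprod{S_1 f}{f}$, $\normTwo{w}_W = \dualprod{S_2 w}{w}$, and $\normTwo{u}_U = \dualprod{S_3 u}{u}$, so that $\norm{\cdot}_\mathbf{X}$ coincides with the $\mathcal{S}_\alpha$-norm required by the abstract theory.

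It remains to establish positive definiteness of the three Schur complements. For $S_1 = \alpha M$ and $S_2 = \tfrac1\alpha M$ this is immediate. The only genuine obstacle is $S_3$, and this is exactly the point where the regularity estimate enters. Since $K_\partial$ is positive semi-definite we have $S_3 \ge \alpha B$, and $\alpha B$ is positive definite by Theorem \ref{theo:convexGrisvard}, as already recorded in Lemma \ref{lemma:S3SPD} (the operator $K$ here is identical to the one appearing there, so no new work on the biharmonic operator is needed). Consequently $S_3$ is positive definite, the hypotheses of Lemma \ref{lem:one} are satisfied, and Corollary \ref{corollary:condNumberNN} applied with $n = 3$ yields both the isomorphism property and the bound $\kappa(\mathcal{S}^{-1}_\alpha \widetilde{\mathcal{A}}_\alpha) \le \cos(\pi/7)/\sin(\pi/14)$, since $2n+1 = 7$.
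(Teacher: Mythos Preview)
Your proposal is correct and follows essentially the same route the paper indicates: identify the blocks so that Problem \ref{prob:simpleAOCKKTLO} fits the $n=3$ template, compute the Schur complements (with $K'M^{-1}K = B$ as in the lemma before Lemma \ref{lemma:S3SPD}), use Theorem \ref{theo:convexGrisvard} via $S_3 \ge \alpha B$ for positive definiteness, and then invoke Corollary \ref{corollary:condNumberNN}. The paper does not spell this out but merely points to the analogous argument for the strong-form case in Subsection \ref{subsec:distrObsdistrCntl}, which is exactly what you reproduce with $A_3 = K_\partial$ in place of $M$.
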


Corollary \ref{cor:distrcntllimitobs} with the preconditioner $\mathcal{S}_\alpha$ was originally proven in \cite{KAMmagneOC}, which was the main motivation for this article. In \cite{KAMmagneOC} 
convexity of $\Omega$ was required.

\section{Preconditioners for discretized optimality systems}
\label{sec:Discretization}

So far we have only addressed optimality systems on the continuous level. In this section we discuss the discretization of optimality systems and efficient preconditioners for the discretized problems.
We will focus on Problem \ref{prob:simpleAOCKKTLO}. The same approach also applies to Problems \ref{prob:simpleAOCKKTstadard} and \ref{prob:simpleAOCKKTLC}.

Let $U_h$ and $W_h$ be conforming finite-dimensional approximation spaces for Problem \ref{prob:simpleAOCKKTLO}; that is,
\[
  U_h \subset H^2(\Omega)\cap H_0^1(\Omega), \quad W_h \subset L^2(\Omega). 
\]
Applying Galerkin's principle to Problem \ref{prob:simpleAOCKKTLO} leads to the following problem:

\begin{problem}
\label{prob:simpleKKTDisLO}
  Find $(f_h,w_h,u_h) \in W_h \times W_h \times U_h$ such that
  \begin{equation}
    \label{eq:simpleA} 
    \widetilde{\mathcal{A}}_{\alpha,h}
  \begin{pmatrix} 
    \underline{f}_h \\
    \underline{w}_h \\
    \underline{u}_h   
  \end{pmatrix} = 
  \begin{pmatrix} 
    0 \\
    0 \\
    \underline{d}_h 
  \end{pmatrix}
  \quad \text{with} \quad
   \widetilde{\mathcal{A}}_{\alpha,h}
     = 
    \begin{pmatrix} 
    \alpha M_h &  M_h    & 0 \\
    M_h        & 0  & K_h \\
    0          & K_h^T  & K_{\partial,h}
  \end{pmatrix} ,
  \end{equation}
where $M_h$, $K_h$, $K_{\partial,h}$ are the matrix representations of linear operators $M$, $K$, $K_\partial$ on $W_h$, $U_h$, $U_h$ relative to chosen bases in these spaces, respectively, and $\underline{f}_h$, $\underline{w}_h$, $\underline{u}_h$, $\underline{d}_h$ are the corresponding vector representations of $f_h$, $w_h$, $u_h$, $N' d$.
\end{problem}

Motivated by Corollary \ref{cor:distrcntllimitobs} we propose the following preconditioner for (\ref{eq:simpleA}):
\begin{equation}
  \label{eq:simplePreOCdis} 
  \mathcal{S}_{\alpha,h} =
  \begin{pmatrix} 
    \alpha M_h &    0  & 0 \\
    0 & \frac{1}{\alpha} M_h & 0\\
    0        & 0    & K_{\partial,h} + \alpha \, B_h
  \end{pmatrix},
\end{equation}
where $B_h$ is given by
\begin{equation}
\label{eq:biharmonic}
\dualprod{B_h \underline{u}_h}{\underline{v}_h} = \innerL{\Delta u_h}{\Delta v_h}, \quad \foralls u_h, v_h \in U_h.
\end{equation}
The operator $\mathcal{S}_{\alpha}$ is self-adjoint and positive definite. Therefore, the preconditioner $\mathcal{S}_{\alpha,h}$ is symmetric and positive definite, since it is obtained by Galerkin's principle. Moreover, the preconditioner $\mathcal{S}_{\alpha,h}$ is a sparse matrix, provided the underlying bases of $U_h$ and $W_h$ consist of functions with local support, which we assume from now on.
The application of the preconditioner within a preconditioned Krylov subspace method requires to solve linear systems of the form
\begin{equation} \label{Swr}
  \mathcal{S}_{\alpha,h} \, \underline{\mathbf{w}}_h = \underline{\mathbf{r}}_h
\end{equation}
for given vectors $\underline{\mathbf{r}}_h$.
Since $\mathcal{S}_{\alpha,h}$ is a sparse matrix, sparse direct solvers can be used for efficiently solving (\ref{Swr}), which is the preferred choice in this paper, see Chapter \ref{sec:Results}. Alternatively, one could also consider geometric or algebraic multigrid methods for solving (\ref{Swr}). For more information about multigrid methods, see \cite{trottenberg2000multigrid}.

Observe that, in general, the preconditioner $\mathcal{S}_{\alpha,h}$ introduced above is different from the Schur complement preconditioner
\begin{equation} \label{theoretPrecond}
  \mathcal{S}(\widetilde{\mathcal{A}}_{\alpha,h}) = \begin{pmatrix} 
    \alpha M_h &    0  & 0 \\
    0 & \frac{1}{\alpha} M_h & 0\\
    0        & 0    & K_{\partial,h} + \alpha \, K_h^{T}M^{-1}_h K_h
  \end{pmatrix},
\end{equation}
as introduced in (\ref{eq:SchurDefNN}). 
Therefore, in general, the condition number estimates derived in Section \ref{sec:Theory} do not hold for $\mathcal{S}_{\alpha,h}$. There is one exception from this rule provided by the next lemma, which is due to \cite{KAMmagneOC}.
We include the short proof for completeness.

\begin{lemma}
\label{lemma:spaceCompLO}
Let $U_h$ and $W_h$ be conforming discretization spaces to Problem \ref{prob:simpleKKTDisLO} 
with \begin{equation} \label{suff1}
\Delta U_h \subset W_h.
\end{equation}
Then we have
\begin{equation*} 
    K_h^{T}M^{-1}_h K_h = B_h.
\end{equation*}
\end{lemma}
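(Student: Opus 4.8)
The plan is to read off the bilinear form on $U_h$ that the matrix product $K_h^{T}M_h^{-1}K_h$ represents, and then to show that under the hypothesis (\ref{suff1}) this form agrees with the biharmonic form (\ref{eq:biharmonic}). The essential tool is the $L^2$-orthogonal projection $P_h\colon \lTwo \rightarrow W_h$, characterized by $\innerL{P_h g}{\psi_h} = \innerL{g}{\psi_h}$ for all $\psi_h \in W_h$. First I would express $M_h^{-1}K_h$ in terms of $P_h$, then evaluate the associated quadratic form for arbitrary $u_h,v_h\in U_h$, and finally invoke (\ref{suff1}) to remove the projection.

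To carry this out, fix $u_h \in U_h$ with coefficient vector $\underline{u}_h$ and let $z_h \in W_h$ be the function whose coefficient vector is $M_h^{-1}K_h\underline{u}_h$. By the definitions of $M_h$ and $K_h$, this $z_h$ is characterized by $\innerL{z_h}{\psi_h} = \dualprod{K u_h}{\psi_h} = -\innerL{\Delta u_h}{\psi_h}$ for every $\psi_h \in W_h$ (note $\Delta u_h \in \lTwo$ since $u_h \in H^2(\Omega)\cap H_0^1(\Omega)$), which is exactly the statement $z_h = -P_h \Delta u_h$. Pairing with $K_h\underline{v}_h$ for a second $v_h \in U_h$ and unwinding the definition of the transpose then gives
\begin{equation*}
  \underline{v}_h^{T} K_h^{T} M_h^{-1} K_h \, \underline{u}_h
    = -\innerL{\Delta v_h}{z_h}
    = \innerL{\Delta v_h}{P_h\Delta u_h}
    = \innerL{P_h\Delta v_h}{P_h\Delta u_h},
\end{equation*}
where the last equality uses that $P_h$ is self-adjoint and idempotent. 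This identity requires no assumption on $U_h,W_h$: it says $K_h^{T}M_h^{-1}K_h$ always represents the ``projected biharmonic'' form $\innerL{P_h\Delta\,\cdot}{P_h\Delta\,\cdot}$.

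Finally I would invoke (\ref{suff1}): since $\Delta U_h \subset W_h$, the projection acts as the identity on these functions, so $P_h\Delta u_h = \Delta u_h$ and $P_h\Delta v_h = \Delta v_h$. Substituting into the display yields $\underline{v}_h^{T} K_h^{T} M_h^{-1} K_h \underline{u}_h = \innerL{\Delta u_h}{\Delta v_h} = \underline{v}_h^{T} B_h \underline{u}_h$ by (\ref{eq:biharmonic}); letting $u_h,v_h$ range over the basis of $U_h$ forces the two matrices to coincide. The only step demanding care is the bookkeeping that identifies $M_h^{-1}K_h\underline{u}_h$ with the coefficients of $-P_h\Delta u_h$ together with the transpose pairing in the display — there is no genuine obstacle here, and the conclusion is essentially immediate once (\ref{suff1}) collapses $P_h\Delta$ to $\Delta$ on $U_h$.
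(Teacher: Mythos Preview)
Your proof is correct and follows essentially the same idea as the paper's. The paper writes $\dualprod{K_h^T M_h^{-1} K_h \underline{u}_h}{\underline{u}_h}$ as the Rayleigh quotient $\sup_{0\neq w_h} \dualprod{K_h\underline{u}_h}{\underline{w}_h}^2/\dualprod{M_h\underline{w}_h}{\underline{w}_h}$, observes that it equals $\norm{\Delta u_h}_{\lTwo}^2$ because $-\Delta u_h\in W_h$ by (\ref{suff1}), and then invokes symmetry of both matrices --- which is exactly the computation you carry out, just packaged via the sup characterization instead of naming the $L^2$-projection $P_h$ explicitly and working with the full bilinear form.
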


\begin{proof}\unskip
We have 
\begin{align*}
\dualprod{K_h^{T}M^{-1}_h K_h \underline{u}_h}{\underline{u}_h}
  & = \sup_{0 \neq w_h\in W_h}\frac{\dualprod{K_h \underline{u}_h}{\underline{w}_h}^{2}}{\dualprod{M_h\underline{w}_h}{\underline{w}_h}} 
    = \sup_{0 \neq w_h\in W_h}\frac{\inner{-\Delta u_h}{w_h}^{2}_{\lTwoB}}{\inner{w_h}{w_h}_{L^2\left(\Omega\right)}} \\
  & = \norm{\Delta u_h}_{\lTwo}^2 = \dualprod{B_h \underline{u}_h}{\underline{u}_h}.
\end{align*}
Since both $K_h^{T}M^{-1}_h K_h$ and $B_h$ are symmetric matrices, equality follows.
\end{proof}

So, under the assumptions of Lemma \ref{lemma:spaceCompLO} we have $\mathcal{S}_{\alpha,h} = \mathcal{S}(\widetilde{\mathcal{A}}_{\alpha,h})$, and, therefore, it follows that
\begin{equation} \label{robustS}
  \kappa(\mathcal{S}_{\alpha,h}^{-1} \, \widetilde{\mathcal{A}}_{\alpha,h}) \le \frac{\cos(\pi/7)}{\sin(\pi/14)} \approx 4.05,
\end{equation}
showing that $\mathcal{S}_{\alpha,h}$ is a robust preconditioner in $\alpha$ and in $h$. 

\begin{remark}
In case that Condition (\ref{suff1}) does not hold, the matrix $K_h^{T}M^{-1}_h K_h$ must be expected to be dense. This makes the application of the Schur complement preconditioner $\mathcal{S}(\widetilde{\mathcal{A}}_{\alpha,h})$ computationally too expensive, while $\mathcal{S}_{\alpha,h}$ is always sparse.
\end{remark}

While $\mathcal{S}_{\alpha,h}$ is always positive definite, the Schur complement preconditioner $\mathcal{S}(\widetilde{\mathcal{A}}_{\alpha,h})$ is symmetric but, in general, only positive semi-definite. However, a simple and mild condition guarantees the definiteness: 
\begin{lemma}
\label{lemma:spaceCompLOb}
Let $U_h$ and $W_h$ be conforming discretization spaces to Problem \ref{prob:simpleKKTDisLO} 
with 
\begin{equation} \label{suff2}
  U_h \subset W_h.
\end{equation}
Then the matrix $K_{\partial,h} + \alpha K_h^{T}M^{-1}_h K_h$ is symmetric and positive definite.
\end{lemma}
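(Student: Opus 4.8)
The plan is to verify the three defining properties in turn, with the inclusion (\ref{suff2}) entering only at the final, decisive step. Symmetry is immediate: $K_{\partial,h}$ is the Galerkin matrix of the symmetric bilinear form $(y,z)\mapsto \inner{\partial_n y}{\partial_n z}_{\lTwoB}$, and $K_h^{T}M_h^{-1}K_h$ is symmetric because $M_h$, being a mass matrix, is symmetric and positive definite, so that $M_h^{-1}$ is symmetric as well. For positive semi-definiteness I would note that both summands have nonnegative quadratic form: $\dualprod{K_{\partial,h}\underline{u}_h}{\underline{u}_h} = \norm{\partial_n u_h}_{\lTwoB}^2 \ge 0$, while $\dualprod{K_h^{T}M_h^{-1}K_h \, \underline{u}_h}{\underline{u}_h} \ge 0$ since $M_h^{-1}$ is positive definite and $\alpha > 0$.

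It remains to rule out a nontrivial null vector. Suppose $\dualprod{(K_{\partial,h}+\alpha K_h^{T}M_h^{-1}K_h)\underline{u}_h}{\underline{u}_h}=0$. Since both summands are nonnegative, each must vanish separately; in particular $\dualprod{K_h^{T}M_h^{-1}K_h \, \underline{u}_h}{\underline{u}_h}=0$, and because $M_h^{-1}$ is positive definite this forces $K_h \underline{u}_h = 0$. By the definition of $K_h$ in Problem \ref{prob:simpleKKTDisLO} this means $\innerL{\Delta u_h}{w_h}=0$ for all $w_h \in W_h$ (this is also visible from the supremum representation used in the proof of Lemma \ref{lemma:spaceCompLO}). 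This is exactly the point at which I would invoke the hypothesis $U_h \subset W_h$: since $u_h \in U_h \subset W_h$, the function $u_h$ is itself an admissible test function, and choosing $w_h = u_h$ yields $\innerL{\Delta u_h}{u_h}=0$.

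The decisive step is then a single integration by parts. For $u_h \in U_h \subset H^2(\Omega)\cap H_0^1(\Omega)$ the boundary term carries the trace $u_h|_{\partial\Omega}=0$, so $\innerL{\Delta u_h}{u_h} = -\norm{\nabla u_h}_{\lTwo}^2$. Hence $\norm{\nabla u_h}_{\lTwo}=0$, so $u_h$ is constant, and being an element of $H_0^1(\Omega)$ it must vanish, i.e.\ $\underline{u}_h = 0$. I expect no genuine obstacle here; the only subtlety worth flagging is that the argument in fact establishes the stronger statement that $\alpha K_h^{T}M_h^{-1}K_h$ is \emph{already} positive definite under (\ref{suff2}), the term $K_{\partial,h}$ being a harmless positive semi-definite addition. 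The role of (\ref{suff2}) is precisely to guarantee that $u_h$ can be used as a test function, which is what converts $K_h\underline{u}_h=0$ into the coercivity estimate; without it one could only conclude that $\Delta u_h$ is $L^2$-orthogonal to $W_h$, which need not force $u_h = 0$.
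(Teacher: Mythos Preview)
Your proof is correct and follows essentially the same route as the paper: both arguments reduce to showing that $K_h^{T}M_h^{-1}K_h$ alone has trivial kernel by testing with $w_h=u_h$ (admissible thanks to $U_h\subset W_h$) and then integrating by parts using $u_h\in H_0^1(\Omega)$. The paper phrases the first step via the supremum representation $\dualprod{K_h^{T}M_h^{-1}K_h\underline{u}_h}{\underline{u}_h}=\sup_{w_h}\inner{-\Delta u_h}{w_h}_{\lTwo}^2/\inner{w_h}{w_h}_{\lTwo}$ and bounds it below by the choice $w_h=u_h$, whereas you first deduce $K_h\underline{u}_h=0$ and then test; these are equivalent formulations of the same idea.
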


\begin{proof}
If (\ref{suff2}) holds, then it follows that
\begin{align*}
 \dualprod{K_h^{T}M^{-1}_h K_h \underline{u}_h}{\underline{u}_h} = \sup_{0 \neq w_h\in W_h}\frac{\inner{-\Delta u_h}{w_h}^{2}_{L^2\left(\Omega\right)}}{\inner{w_h}{w_h}_{L^2\left(\Omega\right)}} 
  \ge \frac{\inner{-\Delta u_h}{u_h}^{2}_{L^2\left(\Omega\right)}}{\inner{u_h}{u_h}_{L^2\left(\Omega\right)}} ,
\end{align*}
by choosing $w_h = u_h \in W_h$.
Therefore, if $\underline{u}_h$ is in the kernel of 
$K_h^{T}M^{-1}_h K_h$, then  $u_h \in U_h \subset H^2(\Omega) \cap H_0^1(\Omega)$ 
and $\inner{\nabla u_h}{\nabla u_h}_{\lTwo} = \inner{-\Delta u_h}{u_h}^{2}_{L^2\left(\Omega\right)} = 0$, which imply $u_h = 0$.
\end{proof}

This lemma shows the importance of Condition (\ref{suff2}), which we will adopt as a condition for our choice of $U_h$ and $W_h$, see below. Additionally, it allows us to compare the practical preconditioner $\mathcal{S}_{\alpha,h}$ with the theoretical Schur complement preconditioner $\mathcal{S}(\widetilde{\mathcal{A}}_{\alpha,h})$, which would guarantee the derived uniform bound of the condition number but is computationally too costly.

Observe that it is required that $U_h \subset H^2(\Omega)\cap H_0^1(\Omega)$. In order to meet this condition $C^1$ finite element spaces were proposed for $U_h$ in \cite{KAMmagneOC}. In particular, the Bogner-Fox-Schmit element on a rectangular mesh was used. 
Here we advocate instead for spline spaces of sufficiently smooth functions as provided in Isogeometric Analysis. For the purpose of this paper we restrict ourselves to a simple version of such approximation spaces, which are shortly described now.
Let $\hat{S}^{p}_{k, \ell}$ be the space of spline functions on the unit interval $(0,1)$ which are $k$-times continuously differentiable and piecewise polynomial of degree $p$ on a mesh of mesh size $2^{-\ell}$ which is obtained by $\ell$ uniform refinements of $(0,1)$. The value $k = -1$ is used for discontinuous spline functions. On 
$(0,1)^d$ we use the corresponding tensor-product spline space, which, for simplicity, is again denoted by $\hat{S}^{p}_{k,\ell}$. It will be always clear from the context what the actual space dimension $d$ is.
It is assumed that the physical domain $\Omega$, can be parametrized by a mapping $\mathbf{F}: (0,1)^d\rightarrow \Omega$ with components $\mathbf{F}_i \in \hat{S}^{p}_{k,\ell}$. The discretization space  $S^{p}_{k,\ell}$ on the domain $\Omega$ is defined by
\begin{equation*} 
S^{p}_{k,\ell} := 
\left\lbrace f \circ \mathbf{F}^{-1} : f \in 
\hat{S}^{p}_{k,\ell}\right\rbrace .
\end{equation*}
All information on this discretization space is summarized by the triple $h = (p,k,\ell)$.
See the monograph \cite{cottrell2009isogeometric} for more details and more sophisticated discretization spaces in IgA.

We proposed the following approximation spaces of equal order:
\[
  U_h = \{ v_h \in S^{p}_{k,\ell} \colon M_{\partial,h} \underline{u}_h = 0 \},
\]
where $M_{\partial,h}$ is the matrix representation of $M_\partial$ on $S^{p}_{k,\ell}$, and
\[
  W_h = S^{p}_{k,\ell}.
\]
For this setting Condition (\ref{suff1}) is not satisfied and the analysis of the proposed preconditioner is not covered by the results of Section \ref{sec:Theory}. Condition (\ref{suff2}) is obviously satisfied  and we will report on promising numerical results in Chapter \ref{sec:Results}.

\begin{remark}
Condition (\ref{suff1}) is rather restrictive. Even if the geometry mapping $\mathbf{F}$ is the identity, the smallest tensor product spline space for $W_h$ for which Condition (\ref{suff1}) holds, is the space $S^{p}_{k-2,\ell}$ if $d \ge 2$. This space has a much higher dimension than the choice $S^{p}_{k,\ell}$ from above without significantly improved approximation properties.
\end{remark}

\begin{remark}
\label{remark:SdisLC}
A completely analogous discussion can be done for the model problems in Subsections \ref{subsec:distrObsdistrCntl} and \ref{subsec:limitedCntl}. For example, 
a sparse preconditioner for the discretized version of Problem \ref{prob:simpleAOCKKTLC}
is given by
\begin{equation*}
{\mathcal{S}}_{\alpha,h} 
  = \begin{pmatrix} 
       M_h & & \\
      & \alpha \, M_{\partial,h} & \\
      & &  \frac{1}{\alpha}\, K_{\partial,h} + B_h
    \end{pmatrix},
\end{equation*}
motivated by Corollary \ref{corollary:condNumberOCControl}.
\end{remark}

\section{Numerical results}
\label{sec:Results}

In this section we present numerical results for two examples of Problem \ref{prob:simpleAOCKKTLO}.

First we consider a two-dimensional example, where the physical domain $\Omega$ is given by its parametrization
$\mathbf{F}\colon (0,1)^2 \rightarrow \mathbb{R}^2$ with
\[
  \mathbf{F}(\mathbf{\xi}) 
  = \begin{pmatrix}
      1+\xi_1 - \xi_1 \xi_2 - \xi_2^2 \\
      2 \xi_1 \xi_2 - \xi_1 \xi_2^2 +2 \xi_2 - \xi_2^2
    \end{pmatrix},
\]
and the prescribed data $d$ are given by $d(\mathbf{x}) = \partial_n (\sin{\left(2\pi x_1\right)} \sin{\left(4\pi x_2\right)}) $ on the boundary $\partial \Omega$.
The domain $\Omega = \mathbf{F}((0,1)^2)$ is a close approximation of a quarter of an annulus, see Figure \ref{fig:2DDomain}. 
\begin{figure}[h]
  \center
  \includegraphics[width=0.45\hsize]{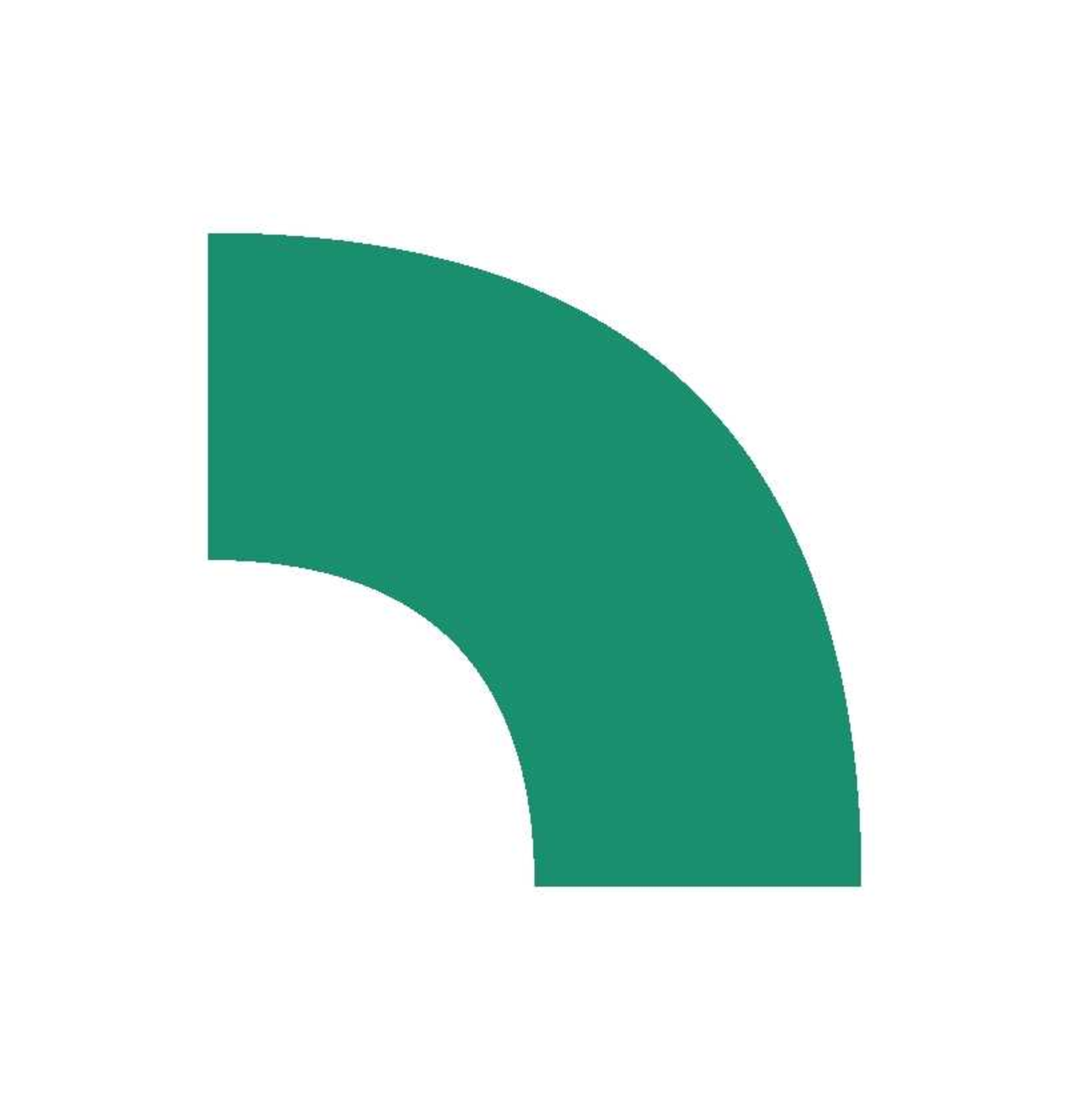}
  \caption{The 2D domain $\Omega$ is an approximation of a quarter of an annulus}
  \label{fig:2DDomain}
\end{figure}

For a fixed polynomial degree $p$ we choose the following discretization spaces of maximal smoothness $k=p-1$:
\[
  U_h = \{ v_h \in S^{p}_{p-1,\ell} \colon M_{\partial,h} \underline{u}_h = 0 \}
  \quad \text{and} \quad W_h = S^{p}_{p-1,\ell}.
\]
The resulting linear system of equations
\[
  \widetilde{\mathcal{A}}_{\alpha,h} \, \underline{\mathbf{x}}_h = \underline{\mathbf{b}}_h
\]
is  solved by using the preconditioned minimal residual method. 
We will present results for the preconditioner $\mathcal{S}_{\alpha,h}$, see (\ref{eq:simplePreOCdis}), and for comparison only, for the Schur complement preconditioner $\mathcal{S}(\widetilde{\mathcal{A}}_{\alpha,h})$, see (\ref{theoretPrecond}).

The iteration starts with the initial guess 0 and stops when
\begin{equation}
 \norm{\underline{\mathbf{r}}_k} \le \epsilon \, \norm{\mathbf{r}_0}
 \quad \text{with} \quad \epsilon = 10^{-8},
\end{equation}
where $\underline{\mathbf{r}}_k$ denotes the residual of the preconditioned problem at $\underline{\mathbf{x}}_k$ and $\norm{.}$ is the Euclidean norm.
All computations are done with the C++ library G+Smo \cite{gismoweb}.

For polynomial degree $p = 2$, Table \ref{t:KKTCanonicalAMAPLO} shows the total number of degrees of freedom (dof) and the number of iterations for different values of the  refinement level $\ell$ and the parameter $\alpha$, when using the Schur complement preconditioner $\mathcal{S}(\widetilde{\mathcal{A}}_{\alpha,h})$. 

\begin{table}[h]
\begin{center} 
  \begin{tabular}{| c | r || c | c | c | c | c | c |}
    \hline
    & & \multicolumn{6}{c|}{$\alpha$} \\ \hline
    $\ell$& \multicolumn{1}{c||}{dof} & 1 & 0.1 & 0.01 & 1e-3 & 1e-5& 1e-7 \\ \hline \hline
    3    & 264    &21&    36&    33&    22&     9&    5 \\  \hline
    4    & 904    &21&    35&    38&    26&     9&    5 \\  \hline
    5    & 3\,336 &21&    35&    35&    29&    10&    5 \\  \hline
    6    & 12\,808&19&    34&    34&    27&     9&    4 \\  \hline
  \end{tabular} %
  \caption{2D example, number of iterations for the preconditioner $\mathcal{S}(\widetilde{\mathcal{A}}_{\alpha,h})$.}
  \label{t:KKTCanonicalAMAPLO}
\end{center}
\end{table}

As predicted from the analysis the number of iterations are bounded uniformly with respect to $\alpha$ and $\ell$.

Table \ref{t:KKTCanonicalBLO} shows the number of iterations when using $\mathcal{S}_{\alpha, h}$. 
\begin{table}[h]
\begin{center}
  \begin{tabular}{| c | r || c | c | c | c | c | c |}
    \hline
    & & \multicolumn{6}{c|}{$\alpha$} \\ \hline
    $\ell$& \multicolumn{1}{c||}{dof} & 1 & 0.1 & 0.01 & 1e-3 & 1e-5& 1e-7 \\ \hline \hline
    3  & 264   &24&    38&    39&    34&    23&    19 \\  \hline
    4  & 904   &25&    38&    41&    36&    22&    18\\  \hline
    5  & 3\,336&25&    38&    40&    34&    22&    17 \\  \hline
    6  &12\,808&25&    38&    39&    31&    19&    13 \\  \hline
  \end{tabular}
  \caption{2D example, number of iterations for the preconditioner $\mathcal{S}_{\alpha, h}$.}
  \label{t:KKTCanonicalBLO}
\end{center}
\end{table}

The numbers in Table \ref{t:KKTCanonicalBLO} are only slightly larger than the numbers in Table \ref{t:KKTCanonicalAMAPLO} for large $\alpha$. For small $\alpha$ some additional iterations are required, nevertheless is appears that method seems to be robust with respect to $\alpha$ and the refinement level $\ell$.

As a second example we consider a three-dimensional variant of the two-dimensional example. The physical domain $\Omega$ is obtained by twisting a cylindrical extension of the two-dimensional domain from the first example. The parametrization is given by the geometry map $\mathbf{F}\colon (0,1)^3 \rightarrow \mathbb{R}^3$ with 
\[
  \mathbf{F}(\mathbf{\xi}) 
  = \begin{pmatrix}
      \frac32 \xi_1 \xi_2^3 \xi_3 - \xi_1 \xi_2^3 -\frac32 \xi_1 \xi_2^2 \xi_3 + \xi_1 +\frac12 \xi_2^3 \xi_3 +\frac12 \xi_2^3 + \frac32 \xi_2^2 \xi_3 -\frac32 \xi_2^2 + 1\\
      \xi_2\left(\xi_1 \xi_2^2 - 3 \xi_1 \xi_2 + 3\xi_1 -\frac12 \xi_2^2 +\frac32 \right)\\
-\xi_2^3 \xi_3 +\frac12 \xi_2^3 + \frac32 \xi_2^2 + \xi_3 
    \end{pmatrix}
\]
and the prescribed data $d$ are given by $d(\mathbf{x}) = \partial_n (\sin{\left(2\pi x_1\right)} \sin{\left(4\pi x_2\right)} \sin{\left(6\pi x_3\right)}) $ on the boundary $\partial \Omega$.
\begin{figure}[h]
  \center
  \includegraphics[width=0.45\hsize]{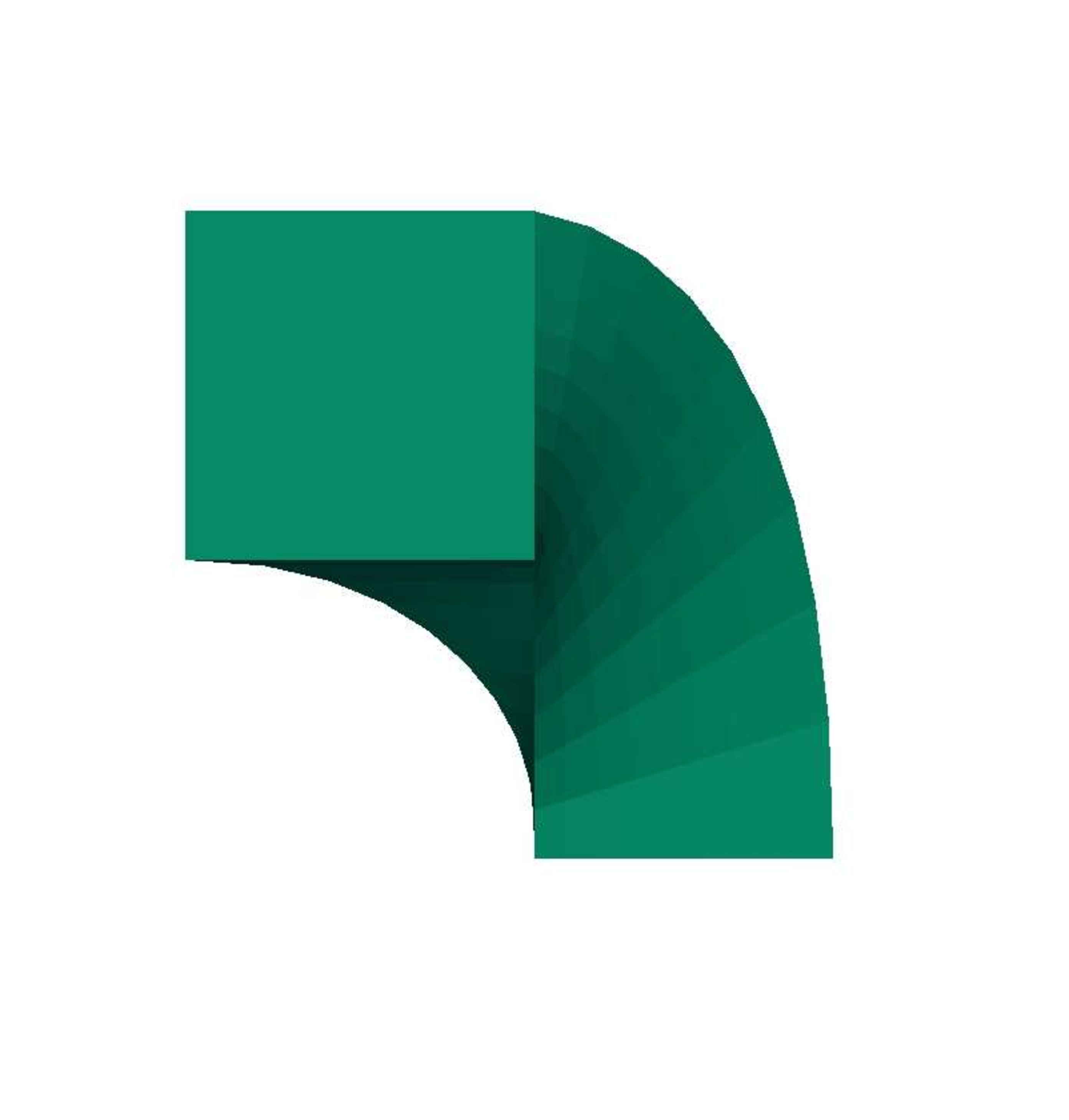}
  \includegraphics[width=0.45\hsize]{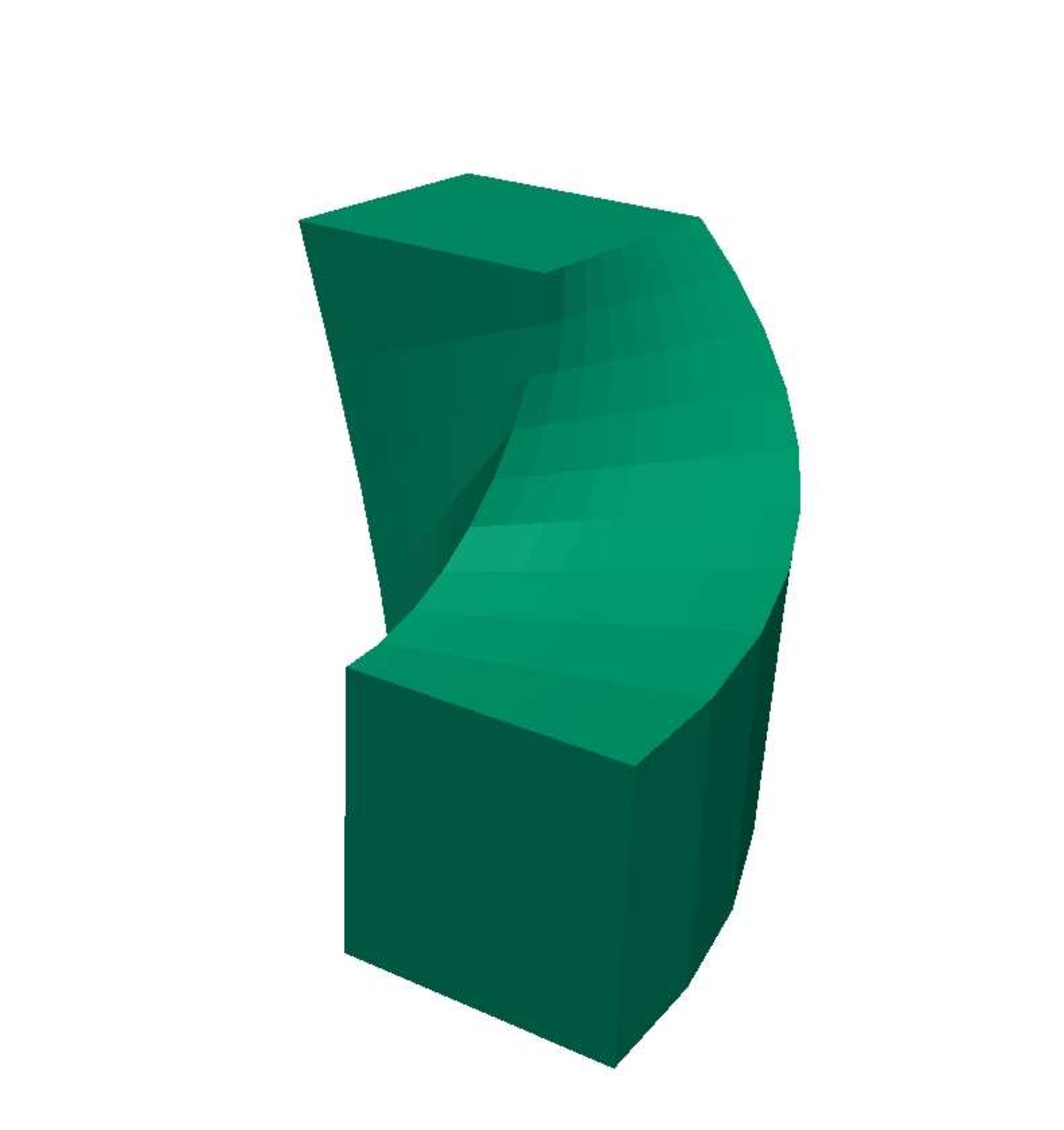}
  \caption{The 3D domain viewed from two different angles}
  \label{fig:3DDomain}
\end{figure}

For polynomial degree $p=3$, Table \ref{t:3DP3} shows the number of iterations for the three-dimensional example, see Figure \ref{fig:3DDomain}, 
using the preconditioner $\mathcal{S}_{\alpha, h}$.

\begin{table}[h]
\begin{center}
  \begin{tabular}{| c | r || c | c | c | c | c | c |}
    \hline
    & & \multicolumn{6}{c|}{$\alpha$} \\ \hline
    $\ell$& \multicolumn{1}{c||}{dof} & 1 & 0.1 & 0.01 & 1e-3 & 1e-5& 1e-7 \\ \hline \hline
    2     & 811     &20 & 35  & 41  & 32 & 19 & 16   \\  \hline
    3     & 3\,391  &23 & 35  & 43  & 40 & 22 & 18   \\  \hline
    4     & 18\,631 &23 & 35  & 43  & 37 & 22 & 17   \\  \hline
    5     & 121\,687&19 & 33  & 38  & 34 & 20 & 13   \\  \hline
  \end{tabular}
  \caption{3D example, number of iterations for the preconditioner $\mathcal{S}_{\alpha, h}$.} 
  \label{t:3DP3}
\end{center}
\end{table}

The number of iterations for the 3D example are similar to their 2D counterpart. 

\section{Conclusions}
\label{sec:Discussion}

Two main results have been shown: new existence results for optimality systems in Hilbert spaces and sharp condition number estimates.
Typical applications for the new existence results are  model problems from optimal control problems with second-order elliptic state equations.
For boundary observation and distributed control the existence of the optimal state in $H^2(\Omega)$ follows for polygonal/polyhedral domains without additional convexity assumptions, although the state equation alone does not guarantee the existence of a solution in $H^2(\Omega)$ if the right-hand side lies in $L^2(\Omega)$. For this class of problems, which initially are seen as classical saddle point problems, it turned out that the reformulation as multiple saddle point problems is beneficial.
Similarly, for distributed observation and boundary control the existence of the optimal Lagrangian in $H^2(\Omega)$ follows for polygonal/polyhedral domains without convexity assumptions. 
These new existence results were obtained by replacing the standard weak formulation of second-order problem by a strong or a very weak formulation depending on the type of optimal control problems.

The new sharp condition number estimates for multiple saddle point problems are to be seen as extensions of well-known sharp bounds for standard saddle point problems. The analysis of saddle point problems in function spaces motivates the construction of sparse preconditioners for discretized optimality systems. The interpretation of standard saddle point problems with primal and dual variables as multiple saddle point problems with possibly more than two types of variables allows the construction of preconditioners based on Schur complements for a wider class of problems.

And, finally, the required discretization spaces of higher smoothness can be handled with techniques from Isogeometric Analysis, which opens the doors to possible extensions to optimal control problems with other classes of state equations like biharmonic equations.

\appendix
\section{Appendix}

The Chebyshev Polynomials of second kind are defined by the recurrence relation
\[
  U_{0}(x) = 1, \quad
  U_{1}(x) = 2x, \quad 
  U_{i+1}(x) = 2xU_{i}(x)-U_{i-1}(x)
  \quad \text{for} \ i \ge 1.
\]
Their closed form representation is given by
\begin{equation} \label{Urep}
  U_j(\cos \theta) = \frac{\sin\left(\left(j+1\right)\theta\right)}{\sin\left(\theta\right)},
\end{equation}
see \cite{rivlin1990chebyshev}.

It immediately follows that the polynomials $P_j(x) := U_j(x/2)$ satisfy the related recurrence relation
\[
P_{0}(x) = 1, \quad 
P_{1}(x) = x, \quad
P_{i+1}(x) = xP_{i}(x)-P_{i-1}(x)
\quad \text{for} \ i \ge 1,
\]
which shows that the polynomials $P_j(x)$ coincide with the polynomials used in the proof of Theorem \ref{theo:CBound}. 
Analogously it follows that the polynomials $\bar{P}_j(x) := P_j(x) - P_{j-1}(x) = U_j(x/2) - U_{j-1}(x/2)$ satisfy the related recurrence relation
\[
\bar{P}_{0}(x) = 1, \quad
\bar{P}_{1}(x) = x-1, \quad
\bar{P}_{i+1}(x) = x\bar{P}_{i}(x)-\bar{P}_{i-1}(x)
\quad \text{for} \ i \ge 1, 
\]
which shows that the polynomials $\bar{P}_j(x)$ coincide with the polynomials used in the proof of Theorems \ref{theo:CBound} and \ref{theo:exactEigV}.

In the next lemma properties of the roots of the polynomials $\bar{P}_j(x)$ are collected, which were used in these theorems.

\begin{lemma} \ \label{lem:CheByRootsandRealProblem} 
\begin{enumerate}
\item
The roots of the polynomial $\bar{P}_j$ are given by
\begin{equation*}
x^{i}_{j} = 2 \, \cos\left(\frac{2i-1}{2j+1}\pi\right), \quad \text{for } i= 1,\ldots,j.
\end{equation*}
\item
For fixed $j$, the root of largest modulus is $x_j^1$. Moreover,
\[
  x_j^1 > 1 \quad \text{and} \quad \bar{P}_i(x_j^1) > 0 \quad \text{for all} \ i = 0,1,\ldots,j-1.
\]
\item
For fixed $j$, the root of smallest modulus $x_j^*$ is given by $x_j^* = x_j^{i^*}$ with $i^* = [j/2]+1$, where $[y]$ denotes the largest integer less or equal to $y$, Moreover,
\[
  \left|x_j^*\right| = 2 \sin \left( \frac{1}{2(2j+1)} \pi \right).
\]
\end{enumerate}
\end{lemma}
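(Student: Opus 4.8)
The plan is to reduce all three parts to a single closed-form trigonometric identity for $\bar{P}_j$ and then read off each claim from it. Starting from $\bar{P}_j(x) = U_j(x/2) - U_{j-1}(x/2)$ and the representation (\ref{Urep}), I would substitute $x = 2\cos\theta$ with $\theta \in (0,\pi)$, apply the sum-to-product identity to $\sin((j+1)\theta) - \sin(j\theta)$, and cancel the common factor using $\sin\theta = 2\sin(\theta/2)\cos(\theta/2)$ to obtain
\[
  \bar{P}_j(2\cos\theta) = \frac{\cos\left(\frac{(2j+1)\theta}{2}\right)}{\cos\left(\theta/2\right)}.
\]
This identity is the main lever for the entire lemma; once it is established, each part is a short computation, and the only real difficulties are bookkeeping.

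For part (1), the denominator never vanishes on $(0,\pi)$, so the zeros of $\bar{P}_j(2\cos\theta)$ coincide with those of $\cos((2j+1)\theta/2)$, i.e. $\theta = \frac{(2i-1)\pi}{2j+1}$. Restricting to $\theta \in (0,\pi)$ selects $i = 1,\ldots,j$, giving $j$ distinct values of $\cos\theta$ and hence the claimed roots $x_j^i = 2\cos\left(\frac{(2i-1)\pi}{2j+1}\right)$; since $\bar{P}_j$ has degree $j$, these exhaust the roots.

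For part (2), monotonicity of $\cos$ on $(0,\pi)$ makes the extreme-modulus candidates the angles nearest $0$ and $\pi$, namely $i=1$ and $i=j$. Using $\cos(\pi-\phi)=-\cos\phi$ gives $|x_j^j| = 2\cos\left(\frac{2\pi}{2j+1}\right) < 2\cos\left(\frac{\pi}{2j+1}\right) = |x_j^1|$, so $x_j^1$ has largest modulus. The inequality $x_j^1 > 1$ reduces to $\frac{\pi}{2j+1} < \frac{\pi}{3}$, which holds for $j \ge 2$ (with equality at the boundary case $j=1$, where $x_1^1 = 1$); this is exactly the range relevant for condition (\ref{ege1}). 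For the sign claim I would evaluate the closed form at $\theta_1 = \pi/(2j+1)$, obtaining $\bar{P}_i(x_j^1) = \cos\left(\frac{(2i+1)\pi}{2(2j+1)}\right)\big/\cos\left(\frac{\pi}{2(2j+1)}\right)$; the denominator is positive and the numerator is positive precisely when $\frac{2i+1}{2j+1} < 1$, i.e. for $i \le j-1$, which is the asserted range.

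For part (3), since $|x_j^i| = 2|\cos\phi_i|$ with equally spaced angles $\phi_i = \frac{(2i-1)\pi}{2j+1}$ of spacing $\frac{2\pi}{2j+1}$, the smallest modulus occurs at the angle closest to $\pi/2$. I would locate this index by rounding the solution of $\phi_i = \pi/2$ and splitting into $j=2m$ and $j=2m+1$; in both cases this yields $i^* = [j/2]+1$ and a distance $\left|\phi_{i^*} - \frac{\pi}{2}\right| = \frac{\pi}{2(2j+1)}$, whence $|x_j^*| = 2\sin\left(\frac{\pi}{2(2j+1)}\right)$. Uniqueness of the minimizer follows because this distance equals one quarter of the spacing, so every other angle is at least three times farther from $\pi/2$. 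The main obstacle throughout is precisely this even/odd case distinction and verifying that the rounding lands on $[j/2]+1$ with the stated distance to $\pi/2$; the conceptual content is entirely carried by the closed-form identity above.
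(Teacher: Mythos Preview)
Your approach is essentially the same as the paper's: both derive the closed-form identity $\bar{P}_j(2\cos\theta) = \dfrac{2\sin(\theta/2)}{\sin\theta}\cos\!\left(\dfrac{2j+1}{2}\theta\right)$ from (\ref{Urep}) (you simply simplify one step further to $\cos((2j+1)\theta/2)/\cos(\theta/2)$) and then read off the roots and the extremal moduli. The paper dismisses parts (2) and (3) as ``elementary calculations'' on the decreasing sequence $x_j^i$, whereas you actually carry them out---including correctly flagging the boundary case $j=1$ where $x_1^1=1$ rather than $>1$---so your write-up is in fact more complete than the paper's own proof.
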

\begin{proof}\unskip
  From (\ref{Urep})
  we obtain
\begin{align*}
\bar{P}_j(2\cos \theta) = \frac{1}{\sin \theta} \big(\sin\left(\left(j+1\right)\theta\right)-\sin\left(j\theta\right)\big)
  = \frac{2\sin(\theta/2)}{\sin \theta} \, \cos\left(\frac{2j+1}{2}\theta\right).
\end{align*}
Then the roots of $\bar{P}_j$ directly follow from the known zeros $\frac{2i-1}{2} \pi$ of $\cos(x)$. For fixed $j$, $x_j^i$ is a decreasing sequence in $i$, for which the rest of the lemma can deduced by elementary calculations.
\end{proof}

In the proof of Theorem \ref{theo:CInvBound} a sequence of matrices $Q_j$ is introduced, whose spectral norms is needed.
It is easy to verify that
\begin{equation}
\label{eq:PnMatDef}
 Q^{-1}_{j} = 
\begin{pmatrix} 
      1  & -1  &   & &\\
      -1 & 0  & 1 &  &\\
         & 1  & 0& \ddots&\\
         &  &\ddots & \ddots & (-1)^{j-1} \\
          &    &   & (-1)^{j-1}  & 0
\end{pmatrix} .
\end{equation}
By Laplace's formula one sees that the polynomials $\det (\lambda \, I - Q_n^{-1})$ satisfy the same recurrence relation as the polynomials $\bar{P}_j(\lambda)$, and, therefore, we have
\[
  \det (\lambda \, I - Q_n^{-1}) = \bar{P}_j(\lambda).
\]
Hence, with the notation from above it follows that
\[
  \|Q_j\| = \frac{1}{\left|x_j^*\right|} = \frac{1}{2 \sin \left( \frac{1}{2(2j+1)} \pi \right)},
\]
which was used for the calculating $\|\mathcal{M}^{-1}\|_{\mathcal{L}(\mathbf{X},\mathbf{X})}$ in Theorem \ref{theo:CInvBound}.

\bibliographystyle{abbrv}
\bibliography{bibliography}

\end{document}